\crefname{assumption}{assumption}{assumptions}
\crefname{figure}{figure}{figures}
\crefname{equation}{}{}
\crefname{subsection}{subsection}{subsections}
\setlist[enumerate,2]{label=(\alph*),ref=\theenumi(\alph*)}
\setlist[enumerate,3]{label=\roman*.,ref=\theenumii\roman*}
\DeclarePairedDelimiterXPP\set[1]{}{\{}{\}}{}{
	
#1}
\DeclarePairedDelimiter\abs{|}{|}
\def\wtd{\widetilde}
\def\what{\widehat}
\def\sss{\scriptscriptstyle}
\def\ii{\mathrm{i}}
\def\ee{\mathrm{e}}
\def\bbI{\mathbb{I}}
\def\bbR{\mathbb{R}}
\def\bbC{\mathbb{C}}
\def\cR{{\cal R}}
\DeclareMathOperator{\diag}{diag}
\DeclareMathOperator{\eig}{eig}
\DeclareMathOperator{\opt}{opt}
\DeclareMathOperator{\rank}{rank}
\DeclareMathOperator{\sign}{sign}
\DeclareMathOperator{\trace}{tr}
\DeclareMathOperator{\HH}{H}
\DeclareMathOperator{\T}{T}
\newtheorem{theorem}{Theorem}[section]
\newtheorem{lemma}{Lemma}[section]
\theoremstyle{definition}
\newtheorem{definition}{Definition}[section]
\newtheorem{remark}{Remark}[section]
\numberwithin{equation}{section}
\numberwithin{figure}{section}
\numberwithin{table}{section}
\def\inertia{\mathfrak{i}}
\def\properind{\mathfrak{d}}
\newcommand\block[2]{\textbf{T-#1(#2)}}
\def\blocko{\textbf{T-o}}
\newcommand\group[1]{\mathcal{ #1}}
\newcommand\bs[1]{\boldsymbol{ #1}}
\def\one{\bs{1}}
\def\be{\bs{e}}
\def\bu{\bs{u}}
\def\bx{\bs{x}}
\title{On Generalizing Trace Minimization Principles, II}
\author{
	Xin Liang%
	\thanks{
		Yau Mathematical Sciences Center, Tsinghua University, Beijing 100084, China, and Yanqi
		Lake Beijing Institute of Mathematical Sciences and Applications, Beijing 101408, China.
		E-mail: {\tt liangxinslm@tsinghua.edu.cn}.  
	}
	\and
	Ren-Cang Li%
	\thanks{Department of Mathematics,
		University of Texas at Arlington, Arlington, TX 76019-0408, USA. Supported in part by NSF DMS-2009689.
	Email: {\tt rcli@uta.edu}.}
}
\date{\today}
\begin{document}
\maketitle
\begin{abstract}
	This paper is concerned with establishing a trace minimization principle for two Hermitian matrix pairs. Specifically, we
	will answer the question: when is \linebreak $\inf_X\trace(\what AX^{\HH}AX)$ subject to $\what BX^{\HH}BX=I$
	(the identity matrix of apt size) finite? Sufficient and necessary conditions are obtained and, when the infimum is finite, an explicit formula for it  is presented in terms of the finite eigenvalues of the matrix pairs. Our results
	extend  Fan's trace minimization principle (1949) for a Hermitian matrix, a minimization principle
	of Kova{\v c}-Striko and K.~Veseli{\' c} (1995) for a Hermitian matrix pair, and most recent ones by the authors and their collaborators for a Hermitian matrix pair and a Hermitian matrix.
\end{abstract}

\smallskip
{\bf Key words.} trace minimization, positive semidefinite matrix pair, Hermitian matrix pair, eigenvalue

\smallskip
{\bf AMS subject classifications}. 15A18, 15A22, 15A42

\section{Introduction}\label{sec:introduction}
Various trace minimization principles have served as the theoretical foundations for computing eigenvalues
of special kinds of matrix pairs  and played important roles in numerical linear algebra
\cite{bali:2012a,bali:2013,bali:2014,ball:2016,knya:2001,krps:2013,li:2004c,li:2015,lili:2015,sawi:1982,soag:2021}.
Fan's trace minimization principle \cite{fan:1949} \cite[p.248]{hojo:2013} is perhaps the earliest and the most well-known one:
\[
	\min_{X^{\HH}X=I_k}\trace(X^{\HH}AX) = \sum_{i=1}^{k} \lambda_i,
\]
where $A\in \bbC^{n\times n}$ is Hermitian with its eigenvalues denoted by $\lambda_1\le\lambda_2\le \dots\le \lambda_n$,
$\trace(\,\cdot\,)$ takes the trace of a matrix, $I_k$ is the $k\times k$ identity matrix.
Moreover, any minimizer $X_{\min}$ is an orthonormal basis matrix of the invariant subspace of $A$ associated with its eigenvalues $\lambda_1,\dots,\lambda_k$. It has since been
generalized to many broader cases:
\begin{enumerate}
	\item The most straightforward generalization is
		$\min\limits_{X^{\HH}BX=I_k}\trace(X^{\HH}AX)$ for a Hermitian matrix pair $(A,B)$, where
		$B$ is positive definite.
	\item For a Hermitian matrix pair $(A,B)$ with indefinite and possibly singular $B$, in
		\cite{kove:1995,lilb:2013,liangL2014extensions}
		$\inf\limits_{X^{\HH}BX=J_k}\trace(X^{\HH}AX)$ is investigated, where $J_k$ is a $k\times k$
		diagonal matrix with diagonal
		entries $\pm1$. It is shown that the infimum is finite if and only if
		$(A,B)$ is a positive semidefinite matrix pair, by which we mean there exists $\lambda_0\in\bbR$ such that $A-\lambda_0B$ is positive semidefinite.
	\item From the perspective of optimization, in
		\cite{liuSW2019quadratic}
		$\min\limits_{X^{\HH}X=I_k}\trace(\what AX^{\HH}AX)$ is  analyzed, where both $A,\,\what A$ are Hermitian
		matrices.
	\item More recently,  the authors of \cite{liwz:2023} investigated two more general cases:
		\begin{enumerate}
			\item $\min\limits_{X^{\HH}BX=I_k}\trace(\what AX^{\HH}AX)$
				where $A,\,B,\,\what A$ are Hermitian matrices and $B$ is positive definite;
			\item  $\inf\limits_{X^{\HH}BX=J_k}\trace(\what AX^{\HH}AX)$ where $J_k=\begin{bmatrix}
					I_{k_+} & \\ & -I_{k_-}
					\end{bmatrix}$ and $\what A=\begin{bmatrix}
					\what A_+ & \\ & \what A_-
				\end{bmatrix}$ have the same block-diagonal structure,  $\what A_\pm$ are of size $k_{\pm}$,
				$A,\,B,\,\what A$ are Hermitian matrices,
				and $(A,B)$ is a positive semidefinite matrix pair.
		\end{enumerate}
\end{enumerate}
Our goal in this paper, as a continuation of \cite{liwz:2023}, is to investigate yet an even more general case:
\begin{equation*}\label{eq:mainP}
	\inf\limits_{\what BX^{\HH}BX=I_k}\trace(\what AX^{\HH}AX),
\end{equation*}
where $A,B,\what A,\what B$ are Hermitian of apt sizes. Our main result relates this infimum to two matrix pairs $(A,B)$ and $(\what A,\what B)$.
With $\what B=I_k$, it reduces to 4(a) and with $\what B=J_k$ it reduces to 4(b) above.

The rest of this paper is organized as follows.
We review the basics about positive semidefinite matrix pairs in \cref{sec:prelim}, and
state our main result of this in paper in \cref{sec:main-result}.
The proof of the main result spreads in the next two sections: \cref{sec:the-simple-case}
deals with the simple case when $n=\what n$ and both pairs $(A,B)$ and $(\what A,\what B)$ are congruent-diagonalizable
while \cref{sec:the-general-case} handles the main result in its generality, with the help of the
result in the simple case. We draw our concluding remarks in \cref{sec:conclusion}.

\paragraph{Notations.}
Throughout this paper, ${\mathbb C}^{n\times m}$ is the set
of  $n\times m$ complex matrices, ${\mathbb C}^n={\mathbb C}^{n\times 1}$,
and ${\mathbb C}={\mathbb C}^1$, and their real counterparts are denoted similarly by replacing $\bbC$ with $\bbR$.
By $\group U_n,\group P_n,\group D_n\in\bbC^{n\times n}$, denote the sets of unitary, permutation, diagonal matrices, respectively (and by $\group U,\group P,\group D$ if their sizes are clear from the context);
and, by $\group P^u_n,\group D^+_n \in\bbC^{n\times n}$, denote the set
of permutation matrices in structure but with nonzero entries being any unit complex number $\ee^{\ii\theta}$ and that of diagonal matrices with nonnegative diagonal entries, respectively (and by $\group P^u,\group D^+$ if their sizes are clear from the context).
$I_n$ (or simply $I$ if its dimension is clear from the context) is the $n\times n$ identity matrix.

For a matrix $X\in\bbC^{m\times n}$, ${\cal N}(X)=\set{\bx\in\bbC^n\,:\,X\bx=0}$ and $\cR(X)=\set{X\bx\,:\,\bx\in\bbC^n}$
are the null space and the range of $X$ (also known as the column space of $X$), respectively.
$X^{\T}$ and $X^{\HH}$ are the transpose and the conjugate transpose of a vector or matrix, respectively.
$A\succ 0$ ($A\succeq 0$) means that $A$ is Hermitian positive (semi)definite, and $A\prec 0$ ($A\preceq 0$) if
$-A\succ 0$ ($-A\succeq 0$).
The eigenvalues of an $n\times n$ Hermitian matrix $A$ are written, according to either increasing or decreasing order,
as
\[
	\lambda_1^{\uparrow}(A)\le\lambda_2^{\uparrow}(A)\le\dots\le\lambda_n^{\uparrow}(A),
	\quad\text{or}\,\,
	\lambda_1^{\downarrow}(A)\ge\lambda_2^{\downarrow}(A)\ge\dots\ge\lambda_n^{\downarrow}(A),
\]
respectively. Hence $\lambda_i^{\uparrow}(A)=\lambda_{n-i+1}^{\downarrow}(A)$.

Other notational convention will be introduced as they appear for the first time.

\section{Preliminaries on a positive semidefinite matrix pair}
\label{sec:prelim}
We review some of related concepts and
results about a positive semidefinite matrix pair $(A,B)$ \cite{lilb:2013}.

Given Hermitian $B\in\bbC^{n\times n}$, the {\em inertia\/} of $B$ is the integer
triplet $(\inertia_+(B), \inertia_0(B), \inertia_-(B))$, meaning $B$ has $\inertia_+(B)$ positive, $\inertia_0(B)$ zero, and $\inertia_-(B)$ negative eigenvalues,
respectively. Necessarily
\begin{equation*}\label{eq:rankB}
	r:= \rank(B)=\inertia_+(B)+\inertia_-(B).
\end{equation*}

Consider $n\times n$ matrix pair $(A,B)$. We say $\mu\ne\infty$ is a {\em finite eigenvalue\/}
of $(A,B)$ if
\begin{equation*}\label{eq:eig-dfn}
	\rank(A-\mu B)<\max_{\lambda\in{\mathbb C}}\rank(A-\lambda B),
\end{equation*}
and $\bx\in{\mathbb C}^n$ is a corresponding {\em eigenvector\/} if $\bx\not\in{\cal N}(A)\cap{\cal N}(B)$ satisfies
\begin{equation*}\label{eq:eigv-dfn}
	A\bx=\mu B\bx,
\end{equation*}
or equivalently, $\bx\in{\cal N}(A-\mu B)\backslash({\cal N}(A)\cap{\cal N}(B))$. Together $(\mu,\bx)$ is
called an {\em eigenpair\/} of $(A,B)$.

\begin{definition}[\cite{lilb:2013,kove:1995}]\label{dfn:HP-SDP}
	$(A, B)$ is a {\em Hermitian matrix pair\/} of order $n$ if both $A,\,B\in{\mathbb C}^{n\times n}$ are
	Hermitian. $(A, B)$ is a {\em positive (semi)definite matrix pair\/} of order $n$ if it is a Hermitian matrix pair
	of order $n$ and if there exists $\lambda_0\in{\mathbb R}$ such that $A-\lambda_0B$ is positive (semi)definite, in notation, $(A,B)\succ 0$ ($\succeq 0$). $(A, B)\prec 0$ ($\preceq 0$) if $(-A,-B)\succ 0$ ($\succeq 0$).
\end{definition}

Given an eigenpair $(\mu,\bx)$ of  a Hermitian matrix pair $(A,B)$, we say $\mu$ is an eigenvalue of positive type if
$\bx^{\HH}B\bx>0$ and of negative type if $\bx^{\HH}B\bx<0$.

Let $(A,B)$ be a positive-semidefinite matrix pair of order $n$ as in the definition.
It is known \cite[Lemma 3.8]{lilb:2013} that
$(A, B)$ has only $r=\rank(B)$ finite eigenvalues all of which are real.
Denote these finite eigenvalues, according to either increasing or decreasing order, by
\[
	\lambda_1^{-\uparrow}(A,B)\le\dots\le\lambda_{\inertia_-(B)}^{-\uparrow}(A,B)
	\le\lambda_1^{+\uparrow}(A,B)\le\dots\le\lambda_{\inertia_+(B)}^{+\uparrow}(A,B),
\]
or
\[
	\lambda_1^{+\downarrow}(A,B)\ge\dots\ge\lambda_{\inertia_+(B)}^{+\downarrow}(A,B)\ge\lambda_1^{-\downarrow}(A,B)\ge\dots\ge\lambda_{\inertia_-(B)}^{-\downarrow}(A,B).
\]
Since both $\set{\lambda_i^{-\uparrow}(A,B),\lambda_j^{+\uparrow}(A,B)}_{i,j}$ and
$\set{\lambda_i^{-\downarrow}(A,B),\lambda_j^{+\downarrow}(A,B)}_{i,j}$
are the same set of finite eigenvalues of $(A,B)$, we will have
\[
	\lambda_i^{-\uparrow}(A,B)=\lambda_{\inertia_-(B)-i+1}^{-\downarrow}(A,B), \quad
	\lambda_j^{+\downarrow}(A,B)=\lambda_{\inertia_+(B)-j+1}^{+\uparrow}(A,B)
\]
for $1\le i\le \inertia_-(B)$
and $1\le j\le \inertia_+(B)$.
Eigenvalues
$\lambda^{+\uparrow}_j(A,B)$ (and $\lambda^{+\downarrow}_j(A,B)$ too)
are those  of positive type (accordingly to their associated eigenvectors $\bx$ that make $\bx^{\HH}B\bx>0$),
whereas $\lambda^{-\uparrow}_j(A,B)$ (and $\lambda^{-\downarrow}_j(A,B)$ too)
are those of negative type (accordingly to  their associated eigenvectors $\bx$ that make $\bx^{\HH}B\bx<0$).
It has also been proved \cite{lilb:2013} that for all $i,\,j$,
\begin{equation*}\label{eq:finite-eig-property}
	\lambda_i^{-\uparrow}(A,B)\le\lambda_0\le\lambda_j^{+\uparrow}(A,B), \quad
	\lambda_j^{+\downarrow}(A,B)\ge\lambda_0\ge\lambda_i^{-\downarrow}(A,B).
\end{equation*}
As a consequence, eigenvalues of positive type are no smaller than those of negative type, i.e.,
\[
	\lambda_j^{+\uparrow}(A,B)-\lambda_i^{-\uparrow}(A,B)\ge 0, \quad
	\lambda_j^{+\downarrow}(A,B)-\lambda_i^{-\downarrow}(A,B)\ge 0.
\]

There is an important comment that needs to be made about the types of the eigenvalues of 
matrix pair $(A,B)\succeq 0$. When $\lambda_0$ in the definition is an eigenvalue, there is
a possibility that $(A,B)$ may have $2$-by-$2$ Jordan block pairs associated 
with eigenvalues $\lambda_0$ (see \Cref{rk:LambdaJ-PD} later):
\begin{equation}\label{eq:lambda2-2x2}
	\left(\begin{bmatrix}
			0 & \lambda_0 \\
			\lambda_0 & 1
			\end{bmatrix},\begin{bmatrix}
			0 & 1 \\
			1 & 0
		\end{bmatrix}
	\right),
\end{equation}
which corresponds to one eigenvector $\bx$ with $\bx^{\HH}B\bx=0$. Each of such Jordan block pairs brings two copies of
$\lambda_0$ as eigenvalues.
In \cite{lilb:2013,liangL2014extensions},
artificially, one copy is regarded as of positive type while the other as of negative type. Although seemingly artificial,
it can be justified by perturbing the first block in the pair to
$\begin{bmatrix}
	\varepsilon & \lambda_0 \\
	\lambda_0 & 1
\end{bmatrix}
$ for $\varepsilon>0$ and letting $\varepsilon\to 0^+$. The perturbation breaks the two copies of
$\lambda_0$ into $\lambda_0+\sqrt{\varepsilon}$ of positive type and $\lambda_0-\sqrt{\varepsilon}$ of negative type.
Any other eigenvalues different from $\lambda_0$ are all associated with Jordan block pairs of $1$-by-$1$.
It can be seen that if $(A,B)\succeq 0$ does have a $2$-by-$2$ Jordan block pair \cref{eq:lambda2-2x2},
then 
$\lambda_1^{-\downarrow}(A,B)=\lambda_1^{+\uparrow}(A,B)=\lambda_0$.
In view of these discussion, we conclude that
\begin{equation}\label{eq:PSD-intvl}
	A-\lambda_0 B\succeq 0\quad\text{for any $\lambda_0\in [\lambda_1^{-\downarrow}(A,B),\lambda_1^{+\uparrow}(A,B)]$}.
\end{equation}
In fact, if $\lambda_1^{-\downarrow}(A,B)<\lambda_1^{+\uparrow}(A,B)$, then 
$(A,B)$ can only have $1$-by-$1$ Jordan block pairs.

Similar statements can be made about the eigenvalues of a negative semidefinite matrix pair.

\section{Main result}\label{sec:main-result}
Once again, we are interested in a minimization principle for

\begin{equation}\label{eq:problem}
	\inf_{\what BX^{\HH}BX=I_{\what n}}\trace(\what A X^{\HH}AX),
\end{equation}
where $A,\,B\in\bbC^{n\times n}$ and $\what A,\,\what B\in\bbC^{\what n\times\what n}$ are all Hermitian matrices,
and $\what n\le n$.
Here we adopt a different notation in $\what n$ from $k$ used in \cref{sec:introduction} to align with
our overall notation structure.
Constraint $\what BX^{\HH}BX=I_{\what n}$ necessarily implies both $\what B$ and $X^{\HH}BX$ are nonsingular and also
\begin{equation}\label{eq:inertia(BtB)}
	\inertia_+(\what B)=\inertia_+(\what B^{-1})=\inertia_+(X^{\HH}BX)\le \inertia_+(B), \quad
	\inertia_-(\what B)\le \inertia_-(B).
\end{equation}


Before stating our main result on \cref{eq:problem}, we  introduce a new notion on Hermitian matrix triplet
$(B,\what A,\what B)$, which we need to express our conditions for the infimum to be finite.

\begin{definition}\label{dfn:proper}
	Given a Hermitian positive semidefinite pair $(\what A,\what B)$ and a Hermitian matrix $B$,
	the triplet $(B,\what A,\what B)$ is said \emph{proper} if one of the following statements holds, where the \emph{proper index pair} $(\properind_+(\what B),\properind_-(\what B))$ is defined along the way:
	\begin{enumerate}[(i)]
		\item $\inertia_+(B)=\inertia_+(\what B)$ and $\inertia_-(B)=\inertia_-(\what B)$, for which
			$(\properind_+(\what B),\properind_-(\what B))=(0,0)$;
		\item $\inertia_+(B)=\inertia_+(\what B),\, \inertia_-(B)>\inertia_-(\what B)$
			and $\lambda_1^{+\uparrow}(\what A,\what B)\ge 0$, for which $\properind_+(\what B)=0$ and
			$\properind_-(\what B)$ is the number of positive ones among $\lambda_j^-(\what A,\what B)$,
			$1\le j\le \inertia_-(B)$;
		\item $\inertia_+(B)>\inertia_+(\what B),\, \inertia_-(B)=\inertia_-(\what B)$
			and $\lambda_1^{-\downarrow}(\what A,\what B)\le 0$, for which $\properind_-(\what B)=0$
			and $\properind_+(\what B)$ is the number of negative ones in $\lambda_j^+(\what A,\what B)$,
			$1\le j\le \inertia_+(B)$;
		\item $\inertia_+(B)>\inertia_+(\what B),\, \inertia_-(B)>\inertia_-(\what B)$
			and $\lambda_1^{-\downarrow}(\what A,\what B)\le 0\le \lambda_1^{+\uparrow}(\what A,\what B)$, for which \linebreak
			$(\properind_+(\what B),\properind_-(\what B))=(0,0)$.
	\end{enumerate}
	Here the dependency of $(\properind_+(\what B),\properind_-(\what B))$ on $B$ and $\what A$ is suppressed for clarity.
	The triplet $(B,\what A,\what B)$ is said \emph{improper} if it is not proper.
\end{definition}

As a corollary of our discussions at the end of \cref{sec:prelim}, the condition
$\lambda_1^{-\downarrow}(\what A,\what B)\le 0\le \lambda_1^{+\uparrow}(\what A,\what B)$ in the case (iv) in the definition
is the same as $\what A\succeq 0$. 

\begin{theorem}\label{thm:J-unitary:total}
	Given four Hermitian matrices $A,B\in \bbC^{n\times n},\what A,\what B\in \bbC^{\what n\times \what n}$
	where $n\ge \what n$,
	suppose that
	$\what A\ne 0$, $A\ne \mu B$ for any $\mu\in \bbR$, and $\what A\ne\what \mu\what B$ for any $\what\mu\in \bbR$
	when $n=\what n$.
	Then
	\[
		\inf\limits_{\what BX^{\HH}BX=I_{\what n}}\trace(\what A X^{\HH}AX)>-\infty,
	\]
	i.e., finite, if and only if one of the following two cases occurs:
	\begin{enumerate}[(i)]
		\item both $(A, B)$ and $(\what A, \what B)$ are positive  semidefinite pairs and
			$(B,\what A,\what B)$ is proper;
		\item both $(A, B)$ and $(\what A, \what B)$ are negative semidefinite pairs and
			$(-B,-\what A,-\what B)$ is proper.
	\end{enumerate}
	Moreover, in the first case,
	we have\footnote {We adopt the convention $\sum_{i=1}^0(\,\cdot\,)\equiv 0$.}
	\begin{align}
		\inf_{\what BX^{\HH}BX=I_{\what n}}\trace(\what A X^{\HH}AX)
		&=
		\sum_{i=1}^{\inertia_+(\what B)-\properind_+(\what B)} \lambda_i^{+\downarrow}(\what A,\what B)\,\lambda_i^{+\uparrow}(A,B)
		+\sum_{i=1}^{\properind_+(\what B)} \lambda_i^{+\uparrow}(\what A,\what B)\,\lambda_i^{+\downarrow}(A,B)
		\notag\\
		&+\sum_{j=1}^{\properind_-(\what B)} \lambda_j^{-\downarrow}(\what A,\what B)\,\lambda_j^{-\uparrow}(A,B)
		+\sum_{j=1}^{\inertia_-(\what B)-\properind_-(\what B)} \lambda_j^{-\uparrow}(\what A,\what B)\,\lambda_j^{-\downarrow}(A,B)
		.  \label{eq:main-inf}
	\end{align}
	The infimum can be attained, when $(\what A,\what B)$ and $(A,B)$ are congruent-diagonalizable.
	Similarly,  in the second case, the formula for the infimum can be gotten by applying \cref{eq:main-inf} to
	matrix pairs $(-A, -B)$ and $(-\what A, -\what B)$.
\end{theorem}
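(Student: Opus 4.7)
The plan is to establish the theorem by focusing on case (i), since case (ii) reduces to it by applying the result to $(-A,-B)$ and $(-\what A,-\what B)$. Within case (i), the argument splits into three pieces, mirroring the organization announced for \cref{sec:the-simple-case} and \cref{sec:the-general-case}: (a) necessity of $(A,B),(\what A,\what B)\succeq 0$ together with properness of $(B,\what A,\what B)$, (b) sufficiency and derivation of \cref{eq:main-inf} when $n=\what n$ and both pairs are congruent-diagonalizable, and (c) extension to the general case via perturbation.

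For necessity, I would suppose the infimum is finite. First, if $(A,B)$ were not semidefinite, then within any admissible $X$ one could inflate an indefinite $B$-direction so that $\trace(\what A X^{\HH}AX)\to -\infty$; a symmetric argument through the $\what A$-factor forces $(\what A,\what B)$ to be semidefinite too. Mismatched signs between the two semidefinite pairs are ruled out because replacing $\what A$ by $-\what A$ flips the sign of the trace, so at most one of cases (i) and (ii) can yield a finite infimum. To force $(B,\what A,\what B)$ proper, I would exploit the freedom in how the constraint $\what BX^{\HH}BX=I_{\what n}$ selects a $\what B$-signature many $B$-eigendirections out of the $\rank B\ge\rank\what B$ available ones: violating the sign conditions on $\lambda_1^{\pm}(\what A,\what B)$ in \cref{dfn:proper}(ii)--(iv) would allow pairing an unboundedly large-magnitude $\lambda^{\pm}(A,B)$ against a $\lambda^{\mp}(\what A,\what B)$ of opposite sign, again sending the trace to $-\infty$.

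For sufficiency in the simple case, I would congruent-diagonalize the pairs to $(\what A,\what B)\cong(\diag(\alpha_i),J)$ and $(A,B)\cong(\diag(\beta_j),\widetilde J)$, where $J,\widetilde J$ are diagonal with $\pm 1$ entries matching the types of the finite eigenvalues. Absorbing the congruence matrices into $X$, the constraint becomes $JX^{\HH}\widetilde JX=I_{\what n}$ and the objective becomes a bilinear form in $\abs{X_{ij}}^2$, $\alpha_i$, and $\beta_j$. The problem decouples along the positive- and negative-type $\what B$-blocks, and within each block reduces to a signed rearrangement inequality: when the relevant $\alpha_i$ share a common sign, the classical rearrangement pairs $\alpha_i^{\uparrow}$ with $\beta_j^{\downarrow}$ (or vice versa); when $\what A$ carries mixed signs on a block, the negative $\alpha_i$'s should pair with the largest $\beta_j$'s in that block and the positive $\alpha_i$'s with the smallest. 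That is precisely what $\properind_+$ and $\properind_-$ count, and it produces \cref{eq:main-inf}, with the explicit optimal pairing providing attainment.

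For the general case, I would perturb away any $2$-by-$2$ Jordan block pair \cref{eq:lambda2-2x2} as described in \cref{sec:prelim}, splitting each eigenvalue $\lambda_0$ of indefinite type into $\lambda_0\pm\sqrt\varepsilon$ of definite type; the perturbed pairs become congruent-diagonalizable, and the right-hand side of \cref{eq:main-inf} varies continuously as $\varepsilon\to 0^+$. The dimension gap $n>\what n$ can be handled by padding $(\what A,\what B)$ with an inert block so as to return to the square case. The main obstacle will be justifying the interchange of $\inf$ with $\varepsilon\to 0^+$: admissible points for the perturbed problem need not be admissible for the original one (the constraint depends on $\what B$ and $B$), and $\varepsilon$-minimizers can attempt to escape along the directions released by Jordan degeneracy. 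I would address this by constructing, from the simple-case optimizers, explicit admissible test points for the original problem that realize the value claimed in \cref{eq:main-inf}, and separately deducing the matching lower bound through a continuity argument on the eigenvalues of the perturbed pairs.
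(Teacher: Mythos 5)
Your skeleton (reduce to canonical forms, settle the square diagonalizable case, perturb for the rest) matches the paper's, but the core technical step is missing and the step you do describe would fail as stated. You claim that after diagonalization the problem ``decouples along the positive- and negative-type $\what B$-blocks'' and ``reduces to a signed rearrangement inequality.'' It does not: the feasible set is the non-compact $J$-unitary group, whose elements mix the two blocks through an unbounded hyperbolic factor, and the matrix $[\abs{x_{ij}}^2]$ is not doubly stochastic, so no rearrangement inequality over permutations bounds the infimum from below. The paper's entire \cref{sec:the-simple-case} exists to overcome exactly this: it uses the ChSh (hyperbolic polar) decomposition \cref{lm:chsh-decomposition} to isolate the hyperbolic part $\Sigma$, shifts by $\lambda_0,\what\lambda_0$ as in \cref{eq:inf(shiftLambda)} so that both shifted diagonal matrices are positive semidefinite and the two cross terms $\tau_2,\tau_3$ are automatically nonnegative, and then controls the two diagonal terms $\tau_1,\tau_4$ --- whose conjugating factors $(I+\wtd\Sigma\wtd\Sigma^{\HH})^{1/2}$ are not unitary --- via the Elsner--Friedland sandwich (\cref{lm:elfr1995}) and the Birkhoff theorem (\cref{lm:gen:full}). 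Without some substitute for these three ingredients your lower bound does not exist, and the same machinery (choosing $\Sigma=\sigma\be_1\be_1^{\T}$ and letting $\sigma\to\infty$) is what actually proves the necessity of semidefiniteness, not the informal ``inflate an indefinite $B$-direction.''

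The second gap is the necessity direction in the general case. A general Hermitian pair is not obtained from a diagonalizable one by merely splitting $2\times 2$ Jordan blocks at $\lambda_0$: the Lancaster--Rodman canonical form (\cref{lm:HCF}) also admits complex-eigenvalue blocks \block{c}{$p$}, singular blocks \block{s}{$2p+1$}, infinite-eigenvalue blocks \block{$\infty$}{$p$}, and real blocks \block{r}{$p$} with $p\ge 3$, and one must show each of these drives the infimum to $-\infty$ (with \block{$\infty$}{$1$} instead imposing the sign condition of \cref{ssec:BP:case2}); your proposal never confronts these. Likewise, the necessity of properness is only gestured at --- the actual mechanism is \cref{lm:NonSq->Sq}: pad $(\what A,\what B)$ to $(\wtd A,\wtd B)$ with the block $(0,J_c)$, observe that the constraint forces the shift $\lambda_0$ to satisfy $-\lambda_0 J_c\succeq 0$, and read off the sign conditions of \cref{dfn:proper}. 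One smaller point in your favor: the interchange-of-limits worry you raise largely evaporates in the paper's setup, because the perturbation is applied to $\Lambda,\what\Lambda$ only while $J,\what J$ (hence the constraint set $X^{\HH}JX=\what J$) are left fixed; perturbing $A,B,\what A,\what B$ themselves, as you propose, would indeed create the admissibility problem you describe.
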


The three excluded cases in the conditions of the theorem are not interesting:
\begin{enumerate}[(1)]
	\item if $\what A=0$, then
		$\trace(\what A X^{\HH}AX)\equiv 0$ for any $X$;
	\item  if $A=\mu B$ for some $\mu\in\bbR$, then any $X$ such that $\what BX^{\HH}BX=I_{\what n}$
		yields $X^{\HH}BX=\what B^{-1}$, and hence
		\[
			\trace(\what A X^{\HH}AX)=\mu\trace(\what A X^{\HH}BX)\equiv\mu\trace(\what A \what B^{-1});
		\]
	\item  if $\what A=\what \mu\what B$ for some $\what\mu\in \bbR$ when $n=\what n$, then
		any $X$ such that $\what BX^{\HH}BX=I_{\what n}=I_n$ which yields $X\what BX^{\HH}=B^{-1}$ and hence
		\[
			\trace(\what A X^{\HH}AX)=\what\mu\trace(\what B X^{\HH}AX)
			=\what\mu\trace(X\what B X^{\HH}A)\equiv\what\mu\trace(B^{-1}A).
		\]
\end{enumerate}
We will comment on the attainability of the infimum in \cref{eq:main-inf} in a moment.


The proof of this theorem spreads out in the next two sections: first for a special case in \cref{sec:the-simple-case}
and then for the general case of the theorem in \cref{sec:the-general-case}. One common step is to simplify the infimum by
performing congruence transformations to transform $(A,B)$ and $(\what A,\what B)$ into their canonical forms
$(\Lambda, J)$ and $(\what\Lambda, \what J)$, respectively:
\begin{equation}\label{eq:congr-trans}
	B=Y^{\HH}JY,\, A=Y^{\HH}\Lambda Y,
	\quad\text{and}\quad
	\what B=\what Y^{\HH}\what J\what Y,\,\what A=\what Y^{\HH}\what \Lambda \what Y,
\end{equation}
where $Y\in\bbC^{n\times n},\,\what Y\in\bbC^{\what n\times \what n}$ are nonsingular,
and the canonical forms $(\Lambda, J)$ and $(\what\Lambda, \what J)$ will be specified later in the two sections of proofs,
dependent of what assumptions to be made on the pairs.
We have by \cref{eq:congr-trans}
\begin{align*}
	\trace(\what A X^{\HH}AX)
	&=\trace(\what Y^{\HH}\what \Lambda \what Y X^{\HH}Y^{\HH}\Lambda YX)  \\
	&=\trace(\what \Lambda \what Y X^{\HH}Y^{\HH}\Lambda YX\what Y^{\HH})  \\
	&=\trace(\what \Lambda\wtd X^{\HH}\Lambda\wtd X),
\end{align*}
where $\wtd X=YX\what Y^{\HH}$.
Notice also, $\what BX^{\HH}BX=I_{\what n}$ can be turned into
\[
	I_{\what n}=\what BX^{\HH}BX=\what Y^{\HH}\what J\what Y X^{\HH}Y^{\HH}JYX=\what Y^{\HH}\what J\wtd X^{\HH}J\wtd X\what Y^{-\HH}
	\Leftrightarrow  \what J\wtd X^{\HH}J\wtd X=I_{\what n},
\]
with the same  $\wtd X=YX\what Y^{\HH}$ moments ago.
Hence
\begin{equation}\label{eq:simplified}
	\inf_{\what BX^{\HH}BX=I_{\what n}}\trace(\what A X^{\HH}AX)
	=\inf_{\what J\wtd X^{\HH}J\wtd X=I_{\what n}}\trace(\what \Lambda\wtd X^{\HH}\Lambda\wtd X)
	=\inf_{ \what JX^{\HH}J X=I_{\what n}}\trace(\what \Lambda X^{\HH}\Lambda X).
\end{equation}

We now comment on the attainability of the infimum in \cref{eq:main-inf}
when $\Lambda$, $J$, $\wtd\Lambda$, and $\what J$ are diagonal.
Suppose that both $(A, B)$ and $(\what A, \what B)$ are also positive semidefinite pairs,
and $\what B$ is nonsingular (because of $\what BX^{\HH}BX=I_{\what n}$). The other case when both
pairs are nonnegative semidefinite can be handled in the same way.
Since any singularity in $B$ can also be eliminated by congruence transformation (see \Cref{rk:LambdaJ-PD} later),
we may also assume that $B$ is nonsingular as well. So we can write
\begin{subequations}\label{eq:JLambda:the-simple-case}
	\begin{align}
		J=\begin{bmatrix}
			I_{n_+}&\\ & -I_{n_-}
		\end{bmatrix}, \quad
		&\Lambda=\kbordermatrix{ &\sss n_+ &\sss n_- \\
			\sss n_+ &\Lambda_+& \\
		\sss n_- &&-\Lambda_-},  \label{eq:JLambda:the-simple-case:1}\\
		\what J=\begin{bmatrix}
			I_{\what n_+}&\\ & -I_{\what n_-}
		\end{bmatrix}, \quad
		&\what\Lambda=\kbordermatrix{ &\sss \what n_+ &\sss \what n_- \\
			\sss \what n_+ &\what\Lambda_+& \\
		\sss \what n_- &&-\what\Lambda_-}, \label{eq:JLambda:the-simple-case:2}
	\end{align}
\end{subequations}
where $n_++n_-= n$, $\what n_++\what n_-=\what n$, and
$\Lambda,\,\what\Lambda$  are real diagonal matrices.
It can be seen that $\eig(A,B)=\eig(\Lambda,J)=\eig(\Lambda_+)\cup \eig(\Lambda_-)$
and $\eig(\what A,\what B)=\eig(\what\Lambda,\what J)=\eig(\what\Lambda_+)\cup \eig(\what\Lambda_-)$,
where and in the following $\eig(\,\cdot\,)$ and $\eig(\,\cdot\,,\,\cdot\,)$ are the spectrum of a matrix
and that of a matrix pair, respectively.
Each eigenvalue $\lambda^+\in\eig(\Lambda_+)$ is of positive type, i.e., $\bx_+^{\HH}B\bx_+>0$ for its associated eigenvector $\bx_+$,
and each eigenvalue $\lambda^-\in\eig(\Lambda_-)$ is of positive type, i.e., $\bx_-^{\HH}B\bx_-<0$ for its associated eigenvector $\bx_-$. The same can be said about $(\what A,\what B)$.
For $\wtd X=P_{(:,1:\what n)}\what P^{\T}$ where
$P\in\group P_n$ and $P_{(:,1:\what n)}$ stands for the first $\what n$ columns of $P$, and $\what P\in\group P_{\what n}$,
we have
\[
	\trace(\what \Lambda\wtd X^{\HH}\Lambda\wtd X)
	=\trace(\big[\what P^{\T}\what \Lambda \what P\big]\big[P_{(:,1:\what n)}^{\T}\Lambda P_{(:,1:\what n)}\big]).
\]
Hence with $\wtd X=P_{(:,1:\what n)}\what P^{\T}$, $\trace(\what \Lambda\wtd X^{\HH}\Lambda\wtd X)$ is the sum of
products between the diagonal entries of $\what\Lambda$, i.e., the eigenvalues of $(\what A,\what B)$,
and some of the those of $\Lambda$, the eigenvalues of $( A, B)$.
Certainly, there is $\wtd X_{\opt}$ that can be explicitly constructed to
gives the right-hand side of \cref{eq:main-inf}. Observe each product there is for two eigenvalues of the same type:
positive or negative, and hence
$P_{(:,1:\what n)}^{\T}J P_{(:,1:\what n)}=\what P^{\T}\what J \what P$ for that particular $\wtd X_{\opt}$, yielding
\[
	[\what P^{\T}\what J \what P][P_{(:,1:\what n)}^{\T}J P_{(:,1:\what n)}]=I_{\what n}
	\,\Rightarrow\,
	\what J\wtd X_{\opt}^{\HH}J\wtd X_{\opt}=I_{\what n}.
\]
Therefore $\wtd X_{\opt}$ attains the second infimum in \cref{eq:simplified}. Finally,
$X_{\opt}=Y^{-1}\wtd X_{\opt}\what Y^{-\HH}$ attains the infimum in \cref{eq:main-inf}.

\section{The simple case}\label{sec:the-simple-case}
In this section, we prove \Cref{thm:J-unitary:total} for the simple case:
\emph{$n=\what n$, and both pairs $(A,B)$ and $(\what A,\what B)$ are congruent-diagonalizable},
namely, we have \cref{eq:congr-trans} with \cref{eq:JLambda:the-simple-case}
and also $n_++n_-= n$, $\what n_+=n_+$,  $\what n_-=n_-$, where
$\Lambda,\,\what\Lambda$  are real diagonal matrices.
In figuring out \cref{eq:JLambda:the-simple-case}, we
note that necessarily, $\what B$, $B$, and
$X$ are nonsingular because of constraint $\what BX^{\HH}BX=I_{\what n}=I_n$, and that,
by the Sylvester inertia law,
$\inertia_{\pm}(\what B)= \inertia_{\pm}(\what B^{-1})= \inertia_{\pm}(B)$ upon noticing
$X^{\HH}BX=\what B^{-1}$.

We have $J=\what J=J^{-1}$ and hence the last infimum in \cref{eq:simplified} becomes
\begin{equation}\label{eq:simple-simplified}
	\inf_{ X^{\HH}J X=J}\trace(\what \Lambda X^{\HH}\Lambda X).
\end{equation}

When $J=\pm I_n$, both $(A,B)$ and $(\what A,\what B)$ are positive semidefinite pairs because
$A-\lambda_0 B\succ 0$ and $\what A-\lambda_0\what B\succ 0$ for $\lambda_0<0$ with sufficiently large $\abs{\lambda_0}$
if $J=I_n$ or for sufficiently large $\lambda_0\in\bbR$ if $J=-I_n$. Also $(A,B)$ and $(\what A,\what B)$ are negative semidefinite pairs,
too, because
$(-A)-\lambda_0(-B)\succ 0$ and $(-\what A)-\lambda_0(-\what B)\succ 0$ for sufficiently large $\lambda_0\in\bbR$
if $J=I_n$ or for $\lambda_0<0$ with sufficiently large $\abs{\lambda_0}$ if $J=-I_n$.

As for \cref{eq:simple-simplified}, the case when\footnote{When $J=-I_n$, $X^{\HH}JX=J$ becomes $X^{\HH}X=I_n$, the same as for $J=I_n$.}
$J=\pm I_n$ is a known one and has been resolved in the literature, e.g.,~\cite[Theorem~4.3.53]{hojo:2013} as stated in the next lemma.

\begin{lemma}[{\cite[Theorem~4.3.53]{hojo:2013}}]\label{lm:unitary:full}
	Given Hermitian matrices $A_i=U_i\Lambda_iU_i^{\HH}\in\bbC^{n\times n}$
	with $U_i\in \group U_n,\, \Lambda_i\in\group D_n$ for $i=0,1$,
	we have
	\[
		\min_{V\in \group U_n}\trace(A_0VA_1V^{\HH})
		=\min_{V\in \group P_n}\trace(\Lambda _0V\Lambda _1V^{\HH})
		=\sum_{i=1}^n\lambda_i^{\downarrow}(A_0)\lambda_i^{\uparrow}(A_1)
		.
	\]
\end{lemma}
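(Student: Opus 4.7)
The plan is to proceed in three reductions: (1) reduce from unitary $V$ to a doubly stochastic matrix of squared magnitudes; (2) reduce from doubly stochastic matrices to permutation matrices via Birkhoff's theorem; (3) identify the minimizing permutation via the classical rearrangement inequality.

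First I would use the unitary invariance of the trace. Writing $A_i = U_i\Lambda_i U_i^{\HH}$ and substituting $W = U_0^{\HH} V U_1$, the cyclic property gives
\[
\trace(A_0 V A_1 V^{\HH}) = \trace(\Lambda_0 W \Lambda_1 W^{\HH}),
\]
so it suffices to treat the diagonal case, that is, to minimize $f(W) := \trace(\Lambda_0 W \Lambda_1 W^{\HH})$ over $W \in \group U_n$. Writing $\Lambda_0 = \diag(a_1,\ldots,a_n)$ and $\Lambda_1 = \diag(b_1,\ldots,b_n)$ and expanding, one finds
\[
f(W) = \sum_{i,j=1}^{n} a_i\, b_j\, |W_{ij}|^2.
\]

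Next I would invoke the fact that for any $W \in \group U_n$, the matrix $S := (|W_{ij}|^2) \in \bbR^{n\times n}$ is doubly stochastic (row and column sums equal $1$, by unitarity). Hence the minimization is bounded below by the linear program
\[
\min_{S \in \Omega_n} \sum_{i,j} a_i b_j\, S_{ij},
\]
where $\Omega_n$ denotes the Birkhoff polytope. By Birkhoff's theorem, $\Omega_n$ is the convex hull of the set $\group P_n$ of permutation matrices, so since the objective is linear the infimum is attained at some extreme point, that is, at a permutation matrix $P_\sigma$ corresponding to a permutation $\sigma$ of $\{1,\ldots,n\}$. Moreover, any $P \in \group P_n$ itself lies in $\group U_n$ and so is realized by an actual unitary $W=P$; this shows the first equality $\min_{V \in \group U_n} = \min_{V \in \group P_n}$ in the statement.

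It then remains to minimize $\sum_{i=1}^n a_i b_{\sigma(i)}$ over permutations $\sigma$, which is exactly the setting of the rearrangement inequality: the sum is minimized when one sequence is sorted in decreasing order against the other sorted in increasing order, giving
\[
\min_\sigma \sum_{i=1}^n a_i b_{\sigma(i)} = \sum_{i=1}^n \lambda_i^{\downarrow}(\Lambda_0)\, \lambda_i^{\uparrow}(\Lambda_1) = \sum_{i=1}^n \lambda_i^{\downarrow}(A_0)\, \lambda_i^{\uparrow}(A_1),
\]
the last equality by unitary similarity. The only delicate step is the passage from $\group U_n$ to $\Omega_n$ to $\group P_n$: one must notice that the map $W \mapsto (|W_{ij}|^2)$ from $\group U_n$ onto (a subset of) $\Omega_n$ need not be surjective, but the lower bound from the linear program is attained on the image because every permutation matrix is unitary, so no slack is introduced. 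The rearrangement step is then standard.
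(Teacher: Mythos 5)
Your proof is correct and follows exactly the route the paper indicates for this lemma: reduce to the diagonal case by unitary invariance, pass to the doubly stochastic matrix $(|W_{ij}|^2)$, apply Birkhoff's theorem to reduce the linear objective to permutation matrices, and finish with the rearrangement inequality. The paper itself only cites Horn--Johnson and remarks that the result "can be proved by using... the Birkhoff theorem," and it uses the same technique in its proof of the related Lemma on $\trace(A_0X^{\HH}A_1X)$, so your argument matches the intended approach.
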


\Cref{lm:unitary:full} can be proved by using an important result on doubly stochastic matrices, namely the
Birkhoff theorem. A matrix $Y\in\bbR^{n\times n}$
is {\em doubly stochastic\/} if entrywise $Y\ge 0$, and $Y\one_n=\one_n$ and $\one_n^{\T}Y=\one_n^{\T}$
where $\one_n\in\bbR^n$ is the vector of all ones. The
Birkhoff theorem says that
the doubly stochastic matrices are convex combinations of permutation
matrices.
Next, we will use this theorem to prove a  result, related to \Cref{lm:unitary:full}, which we will need later.

\begin{lemma}[{\cite{elfr:1995}}]\label{lm:elfr1995}
	Let $X=[x_{ij}]\in\bbC^{n\times n}$ and $Y=[\abs{x_{ij}}^2]\in\bbR^{n\times n}$. Then there exist doubly stochastic matrices $Y_1,\,Y_2\in\bbR^{n\times n}$ such that
	entrywise
	\[
		[\sigma_{\min}(X)]^2\, Y_1 \le Y \le [\sigma_{\max}(X)]^2\, Y_2,
	\]
	where $\sigma_{\min}(X)$ and $\sigma_{\max}(X)$ are the smallest and largest singular values of $X$, respectively.
\end{lemma}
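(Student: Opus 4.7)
The plan is to handle the upper and lower bounds separately, as each reflects a distinct relationship between a nonnegative matrix and the Birkhoff polytope of doubly stochastic matrices.

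For the upper bound, the key observation is that each row sum of $Y$ satisfies $\sum_j |x_{ij}|^2 = \|\be_i^{\HH}X\|_2^2 \le \|X\|_2^2 = \sigma_{\max}(X)^2$, and each column sum is bounded analogously. Thus $Z := Y/\sigma_{\max}(X)^2$ is entrywise nonnegative with all row and column sums $\le 1$ (doubly substochastic). The natural rank-one completion
\[
Y_2 := Z + \tfrac{1}{\tau}(\one - Z\one)(\one - Z^{\T}\one)^{\T}, \qquad \tau := n - \one^{\T}Z\one,
\]
(or $Y_2 = Z$ when $\tau = 0$) is entrywise nonnegative with $Y_2 \one = \one$ and $\one^{\T}Y_2 = \one^{\T}$, and $Y_2 \ge Z$; hence $Y \le \sigma_{\max}(X)^2\,Y_2$ with $Y_2$ doubly stochastic.

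For the lower bound, when $X$ is singular any doubly stochastic $Y_1$ works trivially, so assume $X$ invertible. The plan is to invoke the following characterization (max-flow--min-cut on the bipartite capacity network with edge capacities $z_{ij}$ and unit source/sink capacities, equivalently LP duality on the transportation polytope): a nonnegative $n\times n$ matrix $Z$ dominates some doubly stochastic matrix entrywise if and only if
\[
\sum_{i \in I,\, j \in J} z_{ij} \ge |I| + |J| - n \qquad \text{for every } I,J \subseteq \{1,\dots,n\}.
\]
Taking $Z = Y/\sigma_{\min}(X)^2$, the inequality to verify becomes $\|X(I,J)\|_F^2 \ge m\,\sigma_{\min}(X)^2$ for all $I,J$ with $m := |I|+|J|-n > 0$, where $X(I,J)$ denotes the submatrix on rows $I$ and columns $J$. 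The Thompson (singular-value) interlacing inequality for submatrices gives $\sigma_k(X(I,J)) \ge \sigma_{k + 2n - |I| - |J|}(X) \ge \sigma_{\min}(X)$ for $k = 1,\dots,m$, so
\[
\|X(I,J)\|_F^2 = \sum_k \sigma_k(X(I,J))^2 \ge \sum_{k=1}^{m}\sigma_{\min}(X)^2 = m\,\sigma_{\min}(X)^2,
\]
as required.

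The main obstacle I anticipate is the LP/Hall-type characterization underpinning the lower bound; once it is established, singular-value interlacing closes the argument essentially mechanically. I would prove the characterization by writing the feasibility of $\{W : 0 \le W \le Z,\ W\one = \one,\ \one^{\T}W = \one^{\T}\}$ as a linear program and reading off its dual: minimum $s$--$t$ cuts in the obvious bipartite flow network are parametrized by a row set $I$ and a column-complement set $J^{c}$, and their capacities $(n - |I|) + (n - |J|) + \sum_{i \in I,\, j \in J} z_{ij}$ are all $\ge n$ precisely when the stated inequality holds. A Birkhoff-style peeling argument via Hall's marriage theorem (iteratively extract a permutation matrix supported on large enough entries and subtract an appropriate scalar multiple) offers an alternative route to the same combinatorial conclusion.
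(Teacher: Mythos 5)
Your proof is correct. Note that the paper does not actually prove this lemma---it is quoted verbatim from the cited source (Elsner--Friedland, 1995)---so there is no in-paper argument to compare against; what you have written is a legitimate self-contained proof. The upper bound is the easy half: row and column sums of $Y$ are squared row/column norms of $X$, hence bounded by $\sigma_{\max}(X)^2$, and your explicit rank-one completion of the resulting doubly substochastic matrix to a doubly stochastic one checks out (and the degenerate cases $\tau=0$ and $X=0$ are handled or trivial). The lower bound is the substantive half, and your two ingredients are both sound: the max-flow/min-cut (equivalently Gale--Hoffman) criterion that a nonnegative $Z$ dominates a doubly stochastic matrix entrywise iff $\sum_{i\in I,j\in J}z_{ij}\ge |I|+|J|-n$ for all $I,J$ --- your parametrization of the cuts and the resulting capacity $(n-|I|)+(n-|J|)+\sum_{i\in I,j\in J}z_{ij}$ is exactly right --- combined with Thompson's interlacing $\sigma_k(X(I,J))\ge\sigma_{k+(n-|I|)+(n-|J|)}(X)$, which for $k\le m=|I|+|J|-n$ indeed gives $\sigma_k(X(I,J))\ge\sigma_{\min}(X)$ and hence $\|X(I,J)\|_F^2\ge m\,\sigma_{\min}(X)^2$; one also checks, as you implicitly do, that $\min(|I|,|J|)\ge m$ so the submatrix has at least $m$ singular values. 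This combination of a transportation-polytope feasibility criterion with submatrix singular-value interlacing is close in spirit to the original reference, so you have essentially reconstructed the intended argument rather than found a shortcut; the only part left as a sketch is the feasibility criterion itself, but your LP-duality outline of it is complete enough to be convincing.
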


\begin{lemma}\label{lm:gen:full}
	Given positive semidefinite matrices $A_i=U_i\Lambda_iU_i^{\HH}\in\bbC^{n\times n}$
	with $U_i\in \group U_n$, $\Lambda_i\in\group D_n$ for $i=0,1$, we have
	\begin{align*}
		\trace(A_0X^{\HH}A_1X)
		&\le[\sigma_{\max}(X)]^2\max_{V\in \group P_n}\trace(\Lambda _0V\Lambda _1V^{\HH})) \nonumber \\
		&=[\sigma_{\max}(X)]^2\sum_{i=1}^n\lambda_i^{\downarrow}(A_0)\lambda_i^{\downarrow}(A_1),
		\\
		\trace(A_0X^{\HH}A_1X)
		&\ge[\sigma_{\min}(X)]^2\min_{V\in \group P_n}\trace(\Lambda _0V\Lambda _1V^{\HH})) \nonumber \\
		&=[\sigma_{\min}(X)]^2\sum_{i=1}^n\lambda_i^{\downarrow}(A_0)\lambda_i^{\uparrow}(A_1),
	\end{align*}
\end{lemma}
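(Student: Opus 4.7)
The plan is to reduce $\trace(A_0 X^{\HH} A_1 X)$ to a weighted sum of entries of $[|z_{ij}|^2]$ for a unitarily equivalent $Z$, then sandwich that weight matrix between scaled doubly stochastic matrices via \Cref{lm:elfr1995}, and finally invoke Birkhoff's theorem to reach a maximum/minimum over permutations.

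First I would set $Z = U_1^{\HH} X U_0$ and use the cyclic invariance of trace together with $U_i^{\HH}U_i = I$ to rewrite
\[
\trace(A_0 X^{\HH} A_1 X) = \trace(\Lambda_0 Z^{\HH} \Lambda_1 Z).
\]
Writing $\Lambda_0 = \diag(a_1,\dots,a_n)$, $\Lambda_1 = \diag(b_1,\dots,b_n)$ and expanding gives
\[
\trace(\Lambda_0 Z^{\HH} \Lambda_1 Z) = \sum_{i,j} a_i\, b_j\, |z_{ji}|^2.
\]
Since $A_0,A_1 \succeq 0$ we have $a_i\ge 0$ and $b_j\ge 0$, so the coefficients multiplying the entries of $Y := [|z_{ji}|^2]$ are nonnegative. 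This nonnegativity is what lets the one-sided bounds from \Cref{lm:elfr1995} survive the weighting.

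Next I would apply \Cref{lm:elfr1995} to produce doubly stochastic $Y_1, Y_2$ with
\[
[\sigma_{\min}(Z)]^2 Y_1 \le Y \le [\sigma_{\max}(Z)]^2 Y_2
\]
entrywise, and note that $\sigma_{\min}(Z) = \sigma_{\min}(X)$ and $\sigma_{\max}(Z) = \sigma_{\max}(X)$ by unitary invariance. Combining with the nonnegative weighting gives
\[
[\sigma_{\min}(X)]^2 \sum_{i,j} a_i b_j (Y_1)_{ji} \le \trace(A_0 X^{\HH} A_1 X) \le [\sigma_{\max}(X)]^2 \sum_{i,j} a_i b_j (Y_2)_{ji}.
\]
Then I would invoke the Birkhoff theorem to write each $Y_\ell$ as a convex combination of permutation matrices $P$; each such $P$ satisfies $\sum_{i,j} a_i b_j P_{ji} = \trace(\Lambda_0 P^{\T} \Lambda_1 P)$, so the convex combination is bounded above and below by $\max_{V\in\group P_n}$ and $\min_{V\in\group P_n}$ of $\trace(\Lambda_0 V \Lambda_1 V^{\HH})$, respectively. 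Finally, the equalities in the lemma statement follow from the rearrangement inequality (equivalently, \Cref{lm:unitary:full} applied to $\pm \Lambda_1$): the max is $\sum_i \lambda_i^{\downarrow}(A_0)\lambda_i^{\downarrow}(A_1)$ and the min is $\sum_i \lambda_i^{\downarrow}(A_0)\lambda_i^{\uparrow}(A_1)$.

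The only subtle point is the preservation of inequality signs, which hinges on the positive semidefiniteness assumption ensuring $a_i b_j \ge 0$; without it the sandwich from \Cref{lm:elfr1995} would not transfer directly to the trace. Everything else is bookkeeping via unitary invariance, Birkhoff decomposition, and rearrangement.
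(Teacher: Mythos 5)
Your proposal is correct and follows essentially the same route as the paper: conjugate by $U_0,U_1$ to reduce to diagonal $\Lambda_0,\Lambda_1$, expand the trace as a nonnegative weighted sum of $|z_{ij}|^2$, sandwich via \Cref{lm:elfr1995}, and finish with Birkhoff plus the rearrangement inequality. The paper's proof is just a terser version of the same argument, so there is nothing to add.
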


\begin{proof}
	It can be seen that
	\[
		\trace(A_0X^{\HH}A_1X)=\trace(U_0\Lambda_0U_0^{\HH}X^{\HH}U_1\Lambda_1U_1^{\HH}X)
		=\trace(\Lambda_0[U_1^{\HH}XU_0]^{\HH}\Lambda_1[U_1^{\HH}XU_0]).
	\]
	Write $U_1^{\HH}XU_0=[x_{ij}]$ which has the same singular values as $X$ and let $Y=[\abs{x_{ij}}^2]$. We get
	\[
		\trace(A_0X^{\HH}A_1X)=\sum_{i,j=1}^n\lambda_j(A_0)\lambda_i(A_1)\abs{x_{ij}}^2.
	\]
	Noticing that all $\lambda_j(A_0),\,\lambda_i(A_1)\ge 0$.
	Now use \Cref{lm:elfr1995} and the
	Birkhoff theorem to complete the proof, following the standard technique that has been used
	frequently in the matrix eigenvalue perturbation theory \cite{howi:1953,li:1993a}.
\end{proof}

The key tool to analyze the  infimum in \cref{eq:simple-simplified} is the structure of matrix $X\in \bbC^{n\times n}$ satisfying $X^{\HH}JX=J$. Such matrix $X$ is said \emph{$J$-unitary} in literature.

\begin{lemma}[{\cite[Example~6.3]{veselic2011damped}}]\label{lm:polar-decomposition:+}
	Let $J=\diag(I_{n_+},-I_{n_-})$ and $n=n_++n_-$. A matrix $X\in\bbC^{n\times n}$ satisfies $X^{\HH}JX=J$ if and only if it is of the form
	\begin{equation}\label{eq:lm:polar-decomposition:+}
		X=\begin{bmatrix}
			(I_{n_+}+ W  W ^{\HH})^{1/2} &  W \\
			W ^{\HH} & (I_{n_-}+ W ^{\HH} W )^{1/2}\\
			\end{bmatrix}\begin{bmatrix}
			V_+ & \\ & V_-
		\end{bmatrix},
	\end{equation}
	where $V_+\in \group U_{n_+}$, $V_-\in \group U_{n_-}$ , and $W\in \bbC^{n_+\times n_-}$.
\end{lemma}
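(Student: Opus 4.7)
The plan is to realize the asserted identity as a generalized (hyperbolic) polar decomposition: I will show every $X$ with $X^{\HH}JX=J$ factors uniquely as $X=HU$ with $H$ Hermitian positive definite and $J$-unitary, and $U$ a block-diagonal unitary matrix $U=\diag(V_+,V_-)$, and then parametrize such an $H$ by its $(1,2)$-block $W$. Reading off $W$ and $V_\pm$ from this factorization will yield exactly the form in \cref{eq:lm:polar-decomposition:+}.

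The ``if'' direction is a direct block computation. With $H_{11}=(I_{n_+}+WW^{\HH})^{1/2}$ and $H_{22}=(I_{n_-}+W^{\HH}W)^{1/2}$, the matrix $H=\bigl[\begin{smallmatrix}H_{11}&W\\W^{\HH}&H_{22}\end{smallmatrix}\bigr]$ is Hermitian, and the three independent blocks of $HJH$ reduce to $H_{11}^2-WW^{\HH}=I$, $W^{\HH}W-H_{22}^2=-I$, and $H_{11}W-WH_{22}=0$, the last by the intertwining identity $f(WW^{\HH})W=Wf(W^{\HH}W)$ with $f(t)=\sqrt{1+t}$ (immediate from the SVD of $W$). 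Since $U=\diag(V_+,V_-)$ trivially commutes with $J$, we conclude $X^{\HH}JX=U^{\HH}(HJH)U=U^{\HH}JU=J$.

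For the converse, $X^{\HH}JX=J$ forces $X$ to be invertible, and multiplying by $J$ and inverting also yields $XJX^{\HH}=J$. Set $M:=XX^{\HH}$; then $M$ is Hermitian positive definite and $MJM=X(X^{\HH}JX)X^{\HH}=XJX^{\HH}=J$, equivalently $JMJ=M^{-1}$. The main step, and the one I expect to require the most care, is to transfer $J$-unitarity from $M$ to its positive square root $H:=M^{1/2}$. Using the spectral decomposition $M=V\Lambda V^{\HH}$ one has $JMJ=(JV)\Lambda(JV)^{\HH}$ with $JV$ still unitary, so $(JMJ)^{1/2}=JM^{1/2}J$; hence $JHJ=(M^{-1})^{1/2}=H^{-1}$, which rearranges to $HJH=J$. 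Setting $U:=H^{-1}X$, one checks $U^{\HH}U=X^{\HH}(XX^{\HH})^{-1}X=I$ and $U^{\HH}JU=X^{\HH}(H^{-1}JH^{-1})X=X^{\HH}JX=J$, so $U$ commutes with $J$ and is therefore block-diagonal: $U=\diag(V_+,V_-)$ with $V_\pm$ unitary.

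It remains to parametrize the Hermitian positive definite $J$-unitary $H$. Its principal blocks $H_{11},H_{22}$ are themselves positive definite (as principal submatrices of $H\succ 0$), and expanding $HJH=J$ block by block forces $H_{11}^2=I+WW^{\HH}$ and $H_{22}^2=I+W^{\HH}W$, where $W:=H_{12}$. Uniqueness of the positive Hermitian square root then identifies $H_{11}$ and $H_{22}$ with the claimed factors, while the off-diagonal identity $H_{11}W=WH_{22}$ is automatic by the same intertwining argument used in the sufficiency direction. Multiplying $H$ by $U=\diag(V_+,V_-)$ on the right delivers \cref{eq:lm:polar-decomposition:+}, completing the proof.
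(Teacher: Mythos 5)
Your proof is correct. The paper does not prove this lemma at all---it simply cites it as the (hyperbolic) polar decomposition from Veseli\'c's work---and your argument is precisely the standard proof of that decomposition: factor $X=HU$ with $H=(XX^{\HH})^{1/2}$ positive definite and $J$-unitary (via $JM^{1/2}J=(JMJ)^{1/2}$) and $U$ unitary commuting with $J$, then read off the block structure of $H$ from $HJH=J$ together with the intertwining identity $f(WW^{\HH})W=Wf(W^{\HH}W)$. All steps check out, so this supplies a self-contained proof of a result the paper only quotes.
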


\Cref{lm:polar-decomposition:+} can be found in \cite{veselic1993jacobi,kove:1995}, where \cref{eq:lm:polar-decomposition:+} is called a (hyperbolic) \emph{polar decomposition} of $X$.
In what follows, we will limit our consideration to the case $n_+\ge n_-\ge 1$,
and the other case $1\le n_+<n_-$ can be handled
in a similar way.

A direct consequence of \Cref{lm:polar-decomposition:+} is \Cref{lm:chsh-decomposition} below, in which \cref{eq:lm:chsh-decomposition} is the so-called \emph{ChSh decomposition} of a $J$-unitary matrix $X$, an analogue of the CS decomposition of a unitary matrix \cite{stsu:1990}.

\begin{lemma}[ChSh Decomposition]\label{lm:chsh-decomposition}
	Let $J=\diag(I_{n_+},-I_{n_-})$ and $n=n_++n_-$, where $n_+\ge n_-$. A matrix $X\in \mathbb C^{n\times n}$ is $J$-unitary
	if and only if it is of the form
	\begin{equation}\label{eq:lm:chsh-decomposition}
		\begin{aligned}[b]
			X&=\begin{bmatrix}
				U_+ & \\ & U_-
				\end{bmatrix}\begin{bmatrix}
				I_{n_+-n_-} &&\\
				&(I_{n_-}+ \Sigma^2)^{1/2} &  \Sigma \\
				&	 \Sigma & (I_{n_-}+ \Sigma^2 )^{1/2}\\
				\end{bmatrix}\begin{bmatrix}
				V_+ & \\ & V_-
			\end{bmatrix}
			\\&=\begin{bmatrix}
				U_+ & \\ & U_-
				\end{bmatrix}\begin{bmatrix}
				(I_{n_+}+ \wtd\Sigma\wtd\Sigma^{\HH} )^{1/2} &  \wtd\Sigma \\
				\wtd\Sigma  & (I_{n_-}+ \wtd\Sigma^{\HH} \wtd\Sigma )^{1/2}\\
				\end{bmatrix}\begin{bmatrix}
				V_+ & \\ & V_-
			\end{bmatrix},
		\end{aligned}
	\end{equation}
	where $U_+,V_+\in \group U_{n_+}$ and $U_-,V_-\in  \group U_{n_-}$,
	$\wtd\Sigma=\begin{bmatrix}
		0\\ \Sigma\\
	\end{bmatrix}\in\bbR^{n_+\times n_-}$ with $\Sigma\in \bbR^{n_-\times n_-}$ being diagonal
	and having nonnegative diagonal entries.
\end{lemma}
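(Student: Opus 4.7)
The plan is to derive the ChSh decomposition by composing the hyperbolic polar decomposition of \Cref{lm:polar-decomposition:+} with a singular value decomposition of the off-diagonal block $W$. Specifically, starting from
\[
X=\begin{bmatrix}
(I_{n_+}+ W  W ^{\HH})^{1/2} &  W \\
W ^{\HH} & (I_{n_-}+ W ^{\HH} W )^{1/2}\\
\end{bmatrix}\begin{bmatrix}
V_+ & \\ & V_-
\end{bmatrix},
\]
I would take an SVD of $W\in\bbC^{n_+\times n_-}$ in the form $W=P\wtd\Sigma Q^{\HH}$, where $P\in\group U_{n_+}$, $Q\in\group U_{n_-}$, and $\wtd\Sigma=\begin{bmatrix}0\\ \Sigma\end{bmatrix}\in\bbR^{n_+\times n_-}$ with $\Sigma\in\bbR^{n_-\times n_-}$ diagonal and nonnegative. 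Such a decomposition exists precisely because $n_+\ge n_-$, with the $(n_+-n_-)\times n_-$ zero block placed on top.

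Next I would substitute this SVD into $X$. Using the functional-calculus identities $(I_{n_+}+WW^{\HH})^{1/2}=P(I_{n_+}+\wtd\Sigma\wtd\Sigma^{\HH})^{1/2}P^{\HH}$ and $(I_{n_-}+W^{\HH}W)^{1/2}=Q(I_{n_-}+\wtd\Sigma^{\HH}\wtd\Sigma)^{1/2}Q^{\HH}$, I factor the middle block as $\diag(P,Q)\cdot M_\Sigma\cdot \diag(P^{\HH},Q^{\HH})$, where $M_\Sigma$ is the matrix appearing in the second form of \cref{eq:lm:chsh-decomposition}. Absorbing $\diag(P^{\HH}V_+,Q^{\HH}V_-)$ into new unitary factors then delivers the second form of the ChSh decomposition with $U_\pm:=P$ and $Q$ respectively, and renamed $V_\pm$. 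The first form then follows immediately by the block structure of $\wtd\Sigma$: $\wtd\Sigma\wtd\Sigma^{\HH}=\diag(0,\Sigma^2)$ and $\wtd\Sigma^{\HH}\wtd\Sigma=\Sigma^2$, so taking square roots block-by-block recovers $\diag(I_{n_+-n_-},(I_{n_-}+\Sigma^2)^{1/2})$ and $(I_{n_-}+\Sigma^2)^{1/2}$.

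For the converse direction, I would simply verify by direct computation that every matrix of the stated form is $J$-unitary. The unitary factors $\diag(U_\pm)$ and $\diag(V_\pm)$ commute with $J$ in the sense that $\diag(U_+^{\HH},U_-^{\HH})\,J\,\diag(U_+,U_-)=J$, so it suffices to check $M_\Sigma^{\HH}JM_\Sigma=J$. Expanding in $2\times 2$ block form, the diagonal blocks reduce to $I_{n_\pm}$ by $A^2-\wtd\Sigma\wtd\Sigma^{\HH}=I_{n_+}$ and $B^2-\wtd\Sigma^{\HH}\wtd\Sigma=I_{n_-}$, where $A=(I_{n_+}+\wtd\Sigma\wtd\Sigma^{\HH})^{1/2}$ and $B=(I_{n_-}+\wtd\Sigma^{\HH}\wtd\Sigma)^{1/2}$; the off-diagonal blocks vanish because of the general intertwining identity $f(\wtd\Sigma\wtd\Sigma^{\HH})\wtd\Sigma=\wtd\Sigma\,f(\wtd\Sigma^{\HH}\wtd\Sigma)$ applied to $f(t)=(1+t)^{1/2}$, which gives $A\wtd\Sigma=\wtd\Sigma B$. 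The only subtle point — and the main obstacle — is keeping track of the zero block on top of $\wtd\Sigma$ so that the shape of the ChSh matrix matches the stated form exactly; once the SVD convention is fixed, everything else is bookkeeping.
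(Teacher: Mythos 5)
Your proposal is correct and is exactly the route the paper intends: the paper states \Cref{lm:chsh-decomposition} as a ``direct consequence'' of the hyperbolic polar decomposition in \Cref{lm:polar-decomposition:+}, and your argument---taking an SVD $W=P\wtd\Sigma Q^{\HH}$, pushing $P$ and $Q$ through the square roots via $f(WW^{\HH})=Pf(\wtd\Sigma\wtd\Sigma^{\HH})P^{\HH}$, and absorbing the residual unitaries---is precisely how that consequence is realized. Your direct verification of the converse via $A^2-\wtd\Sigma\wtd\Sigma^{\HH}=I_{n_+}$, $B^2-\wtd\Sigma^{\HH}\wtd\Sigma=I_{n_-}$, and the intertwining identity $A\wtd\Sigma=\wtd\Sigma B$ is also sound.
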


\begin{lemma}\label{lm:LambdaJ>=0}
	Let $(\Lambda, J)$ be as in \cref{eq:JLambda:the-simple-case:1} where $n_{\pm}\ge 1$ and
	$\Lambda$ is real diagonal. Then
	\begin{enumerate}[{\rm (i)}]
		\item $(\Lambda,J)\succeq0$ if and only if
			$\lambda_i^+-\lambda_j^-\ge 0$ for any $\lambda_i^+\in \eig(\Lambda_+),\lambda_j^-\in\eig(\Lambda_-)$;
		\item $(\Lambda,J)\preceq0$ if and only if
			$\lambda_i^+-\lambda_j^-\le 0$ for any $\lambda_i^+\in \eig(\Lambda_+),\lambda_j^-\in\eig(\Lambda_-)$.
	\end{enumerate}
\end{lemma}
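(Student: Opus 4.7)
The plan is to unpack the definition of a (semi)definite matrix pair and exploit the block-diagonal structure of $(\Lambda,J)$. By \Cref{dfn:HP-SDP}, $(\Lambda,J)\succeq 0$ means there exists some $\lambda_0\in\bbR$ with $\Lambda-\lambda_0 J\succeq 0$, and with the given form this becomes
\[
	\Lambda-\lambda_0 J=\begin{bmatrix} \Lambda_+-\lambda_0 I_{n_+} & \\ & -(\Lambda_--\lambda_0 I_{n_-}) \end{bmatrix}\succeq 0.
\]
Since the matrix is block-diagonal with diagonal blocks, this is equivalent to requiring each diagonal entry of $\Lambda_+-\lambda_0 I_{n_+}$ to be $\ge 0$ and each diagonal entry of $\Lambda_--\lambda_0 I_{n_-}$ to be $\le 0$, i.e.
\[
	\max_j\lambda_j^- \,\le\, \lambda_0 \,\le\, \min_i\lambda_i^+,
\]
where $\lambda_i^+$ runs over $\eig(\Lambda_+)$ and $\lambda_j^-$ over $\eig(\Lambda_-)$.

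The first step of the proof is therefore: observe that such a $\lambda_0$ exists if and only if $\max_j\lambda_j^-\le\min_i\lambda_i^+$, which is precisely the condition $\lambda_i^+-\lambda_j^-\ge 0$ for every $(i,j)$. This establishes part (i) in both directions simultaneously: if $(\Lambda,J)\succeq 0$, pick the witness $\lambda_0$ and read off the inequalities from the diagonal; conversely, if all differences are nonnegative, then any $\lambda_0$ in the (nonempty) interval $[\max_j\lambda_j^-,\min_i\lambda_i^+]$ serves as a witness.

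For part (ii), I would simply apply part (i) to the pair $(-\Lambda,-J)$, noting that $-\Lambda$ has the same block form with $\Lambda_\pm$ replaced by $-\Lambda_\mp$ (after swapping which block is associated with the positive identity), so the inequalities reverse. Alternatively one can repeat the one-line argument with $\lambda_0 J-\Lambda\succeq 0$ and obtain the interval condition $\max_i\lambda_i^+\le\lambda_0\le\min_j\lambda_j^-$.

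There is no real obstacle here — the lemma is essentially a restatement of ``a block-diagonal Hermitian matrix is positive semidefinite iff each block is,'' combined with the fact that a diagonal matrix has nonnegative diagonal iff its spectrum is nonnegative. The only small care needed is keeping track of the sign in the $(2,2)$ block of $\Lambda$, which flips the direction of the inequality for the $\Lambda_-$ entries and produces the uniform condition stated in the lemma.
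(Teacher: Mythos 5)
Your proof is correct and takes essentially the same approach as the paper's: both reduce $\Lambda-\lambda_0 J\succeq 0$ to the diagonal condition $\max_j\lambda_j^-\le\lambda_0\le\min_i\lambda_i^+$, note that such a $\lambda_0$ exists precisely when all differences $\lambda_i^+-\lambda_j^-$ are nonnegative, and obtain (ii) by applying (i) to $(-\Lambda,-J)$.
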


\begin{proof}
	If $(\Lambda,J)\succeq0$, then there exists $\lambda_0\in\bbR$ such that $\Lambda-\lambda_0J\succeq 0$, i.e.,
	$\lambda_j^-\le\lambda_0\le\lambda_i^+$ for any $\lambda_i^+\in \eig(\Lambda_+),\lambda_j^-\in\eig(\Lambda_-)$
	and thus $\lambda_i^+-\lambda_j^-\ge 0$.
	On the other hand if $\lambda_i^+-\lambda_j^-\ge 0$ for any $\lambda_i^+\in \eig(\Lambda_+),\lambda_j^-\in\eig(\Lambda_-)$,
	then
	\[
		\max\set{\lambda_j^-\,:\,\lambda_j^-\in\eig(\Lambda_-)}
		\le\min\set{\lambda_i^+\,:\,\lambda_i^+\in \eig(\Lambda_+)}
	\]
	and hence any $\lambda_0$ that lies between the maximum and minimum in this inequality makes
	$\Lambda-\lambda_0J\succeq 0$. This proves item (i). For item (ii), by definition,
	$(\Lambda,J)\preceq0$ if and only if $(-\Lambda,-J)\succeq0$, and we then can use item (i).
\end{proof}

With \Cref{lm:chsh-decomposition}, we have
\begin{multline}\label{eq:J-unitary:half:inf:1}
	\inf_{X^{\HH}JX=J
	}\trace(\what \Lambda  X^{\HH}\Lambda X)
	= \inf_{0\preceq\Sigma\in \group D_{n_-}\atop  U_{\pm},V_{\pm}\in \group U_{n_{\pm}}}
	\trace\Bigg(\begin{bsmallmatrix}
			V_+\what \Lambda_+V_+^{\HH} & \\ & -V_-\what \Lambda_-V_-^{\HH}\\
			\end{bsmallmatrix}\begin{bsmallmatrix}
			I&&\\
			&(I + \Sigma^2 )^{1/2} &  \Sigma \\
			&	 \Sigma  & (I + \Sigma^2 )^{1/2}
		\end{bsmallmatrix}\times \\
		\begin{bsmallmatrix}
			U_+^{\HH}\Lambda_+U_+ & \\ & -U_-^{\HH}\Lambda_-U_-\\
			\end{bsmallmatrix}\begin{bsmallmatrix}
			I&&\\
			& (I + \Sigma^2 )^{1/2} &  \Sigma \\
			& \Sigma  & (I + \Sigma^2 )^{1/2}\\
	\end{bsmallmatrix}  \Bigg)
\end{multline}

By \Cref{lm:LambdaJ>=0}, if
$(\Lambda,J)$ and $(\what\Lambda,J)$ are not both positive semidefinite pairs, or not both negative definite pairs,
then\footnote {Besides the condition just mentioned, this claim also requires the condition given in the theorem:
	$A\ne \mu B$ for any $\mu\in \bbR$ and $\what A\ne\what \mu\what B$ for any $\what\mu\in \bbR$, 
	which is equivalent to $\Lambda\ne \mu J$ for any $\mu\in \bbR$ and $\what \Lambda\ne\what \mu J$ for any $\what\mu\in \bbR$ because of \cref{eq:congr-trans}. Otherwise if $\Lambda=\lambda_0 J$ for some $\lambda_0\in\bbR$, then
	$(\Lambda,J)\succeq 0$ and $\lambda_i^+-\lambda_j^-=0$ for any $\lambda_i^+\in \eig(\Lambda_+)$ and $\lambda_j^-\in\eig(\Lambda_-)$ and hence
	$\big(\what\lambda_{\what i}^+-\what\lambda_{\what j}^-\big)(\lambda_i^+-\lambda_j^-)=0$,
	regardless whether $(\what\Lambda,J)\succeq 0$ or not.
}
there exist $\what \lambda_{\what i}^+\in \eig(\what \Lambda_+)$, $\what \lambda_{\what j}^-\in\eig(\what \Lambda_-)$,
$\lambda_i^+\in \eig(\Lambda_+)$, and $\lambda_j^-\in\eig(\Lambda_-)$
with
\[
	\big(\what\lambda_{\what i}^+-\what\lambda_{\what j}^-\big)(\lambda_i^+-\lambda_j^-)<0.
\]
We now restrict $\Sigma$, $U_{\pm}$, and $V_{\pm}$ in \cref{eq:J-unitary:half:inf:1} to special ones and doing so will
increase the infimum there. Specifically, we let
$\Sigma=\sigma \be_1\be_1^{\T}$
where $\sigma$ is free to vary and $\be_1$ is the first column of $I$ of apt size, and
$V_{\pm}$ and $U_{\pm}$ as products of permutation matrices
\[
	V_+=P_{2+}\what P_{1+}^{\HH}, \quad
	V_-=P_{2-}\what P_{1-}^{\HH}, \quad
	U_+=P_{1+}P_{2+}^{\HH}, \quad
	U_+=P_{1-}P_{2-}^{\HH}
\]
such that
\begin{gather*}
	\big[\what P_{1+}^{\HH}\what\Lambda_+\what P_{1+}\big]_{(1,1)}=\what\lambda_{\what i}^+, \quad
	\big[P_{1+}^{\HH}\Lambda_+P_{1+}\big]_{(1,1)}=\lambda_{i}^+, \\
	\big[\what P_{1-}^{\HH}\what\Lambda_-\what P_{1-}\big]_{(1,1)}=\what\lambda_{\what j}^-, \quad
	\big[P_{1-}^{\HH}\Lambda_-P_{1-}\big]_{(1,1)}=\lambda_{j}^-, \\
	\big[P_{2+}^{\HH}\diag\big(I,(I + \Sigma^2 )^{1/2}\big)P_{2+}\big]_{(1,1)}=(1+\sigma^2)^{1/2}, \\
	\big[P_{2-}^{\HH}(I + \Sigma^2 )^{1/2}P_{2-}\big]_{(1,1)}=(1+\sigma^2)^{1/2},
\end{gather*}
where $[\cdots]_{(1,1)}$ is the $(1,1)$th entry of a matrix.
%
%
We get from \cref{eq:J-unitary:half:inf:1}
\begin{align}
	\MoveEqLeft[0]\inf_{X^{\HH}JX=J
	}\trace(\what \Lambda  X^{\HH}\Lambda X)
	\notag\\
	&\le \inf_{\sigma>0}\trace\left(\begin{bsmallmatrix}
			\what \lambda^+_{\what i} & \\ & *\\ & & -\what \lambda^-_{\what j} \\ &&&*\\
			\end{bsmallmatrix}\begin{bsmallmatrix}
			(1 + \sigma^2 )^{1/2} & &  \sigma & \\
			& I & & 0\\
			\sigma &  & (1 + \sigma^2 )^{1/2} &\\
			& 0 && I\\
			\end{bsmallmatrix}\begin{bsmallmatrix}
			\lambda_{i}^+ & \\ & *\\ & & -\lambda_{j}^- \\ &&&*\\
			\end{bsmallmatrix}\begin{bsmallmatrix}
			(1 + \sigma^2 )^{1/2} & &  \sigma & \\
			& I & & 0\\
			\sigma &  & (1 + \sigma^2 )^{1/2} &\\
			& 0 && I\\
	\end{bsmallmatrix}\right)
	\notag\\
	&=\inf_{\sigma>0}\trace\left(\begin{bsmallmatrix}
			\what \lambda_{\what i}^+ & \\ & -\what \lambda_{\what j}^- \\
			\end{bsmallmatrix}\begin{bsmallmatrix}
			(1 + \sigma^2 )^{1/2} & \sigma  \\
			\sigma &  (1 + \sigma^2 )^{1/2} \\
			\end{bsmallmatrix}\begin{bsmallmatrix}
			\lambda_{i}^+ & \\ & -\lambda_{j}^- \\
			\end{bsmallmatrix}\begin{bsmallmatrix}
			(1 + \sigma^2 )^{1/2} & \sigma  \\
			\sigma &  (1 + \sigma^2 )^{1/2} \\
	\end{bsmallmatrix}\right)+\text{(constant)}
	\notag\\
	&=\inf_{\sigma>0}\,\big(\what\lambda_{\what i}^+-\what\lambda_{\what j}^-\big)(\lambda_i^+-\lambda_j^-)\,\sigma^2
	+\text{(constant)}    \notag\\
	&= -\infty		.  \label{eq:J-unitary:half:inf}
\end{align}

Suppose now that  $(\Lambda,J)$ and $(\what \Lambda,J)$ are both positive semidefinite or both negative semidefinite.
Since we can switch to considering
$(-\Lambda,-J)$ and $(-\what \Lambda,-J)$ instead if both $(\Lambda,J)$ and $(\what \Lambda,J)$ are  negative semidefinite, it suffices to consider both $(\Lambda,J)$ and $(\what \Lambda,J)$ are positive semidefinite only, which we now assume.
Then there exist $\lambda_0,\what \lambda_0\in\bbR$ such that
$\Lambda-\lambda_0 J\succeq 0$ and $\what\Lambda-\what\lambda_0J\succeq 0$.
With $X^{\HH}JX=J$, we have
\begin{align}
	\trace(\what \Lambda X^{\HH}\Lambda X)
	&= \trace(\what \Lambda X^{\HH}[\Lambda-\lambda_0 J] X)+\lambda_0\trace(\what \Lambda J
	)
	\nonumber\\
	&= \trace([\what \Lambda-\what \lambda_0J
	] X^{\HH}[\Lambda-\lambda_0 J] X)+\what\lambda_0\trace(J
	X^{\HH}[\Lambda-\lambda_0 J] X)+\lambda_0\trace(\what \Lambda J
	)
	\nonumber\\
	&= \trace([\what \Lambda-\what \lambda_0J
	] X^{\HH}[\Lambda-\lambda_0 J] X)+\what\lambda_0\trace((JX)^{-1}[\Lambda-\lambda_0 J] X)+\lambda_0\trace(\what \Lambda J
	)
	\nonumber\\
	&= \trace([\what \Lambda-\what \lambda_0J
	] X^{\HH}[\Lambda-\lambda_0 J] X)+\what\lambda_0\trace(J\Lambda )-\what\lambda_0\lambda_0\trace(I)+\lambda_0\trace(\what \Lambda J
	)
	, \label{eq:inf(shiftLambda)}
\end{align}
where only the first term varies with $X$. Since $\Lambda-\lambda_0 J\succeq 0$ and $\what\Lambda-\what\lambda_0J\succeq 0$,
it effectively transforms the problem into the case when both $\what \Lambda$ and $\Lambda$ are positive semidefinite, i.e.,
$\Lambda_+,\,\what\Lambda_+\succeq 0$ and $-\Lambda_-,\,-\what\Lambda_-\succeq 0$,
which we will assume for the moment.
Then,
\begin{align*}
	\MoveEqLeft[0]\inf_{X^{\HH}JX=J
	}\trace(\what \Lambda  X^{\HH}\Lambda X)
	\\
	&= \inf_{0\preceq\Sigma\in \group D_{n_-}\atop  U_{\pm},V_{\pm}\in \group U_{n_{\pm}}}
	\trace\Bigg(\begin{bsmallmatrix}
			V_+\what \Lambda_+V_+^{\HH} & \\ & -V_-\what \Lambda_-V_-^{\HH}\\
			\end{bsmallmatrix}\begin{bsmallmatrix}
			(I + \wtd\Sigma\wtd\Sigma^{\HH} )^{1/2} &  \wtd\Sigma \\
			\wtd\Sigma  & (I + \wtd\Sigma^{\HH}\wtd\Sigma )^{1/2}\\
		\end{bsmallmatrix} \times \\
		&\hspace{3cm}\begin{bsmallmatrix}
			U_+^{\HH}\Lambda_+U_+ & \\ & -U_-^{\HH}\Lambda_-U_-\\
			\end{bsmallmatrix}\begin{bsmallmatrix}
			(I + \wtd\Sigma\wtd\Sigma^{\HH} )^{1/2} &  \wtd\Sigma \\
			\wtd\Sigma  & (I + \wtd\Sigma^{\HH}\wtd\Sigma )^{1/2}\\
	\end{bsmallmatrix}  \Bigg)
	\\
	&= \inf_{0\preceq\Sigma\in \group D_{n_-}\atop  U_{\pm},V_{\pm}\in \group U_{n_{\pm}}}
	\Bigg[
		\underbrace{\trace\Big( V_+\what \Lambda_+V_+^{\HH}(I + \wtd\Sigma\wtd\Sigma^{\HH} )^{1/2}U_+^{\HH}\Lambda_+U_+(I + \wtd\Sigma\wtd\Sigma^{\HH} )^{1/2}\Big)}_{=:\tau_1} \\
		&\hspace{3cm}+\underbrace{\trace\Big( V_+\what \Lambda_+V_+^{\HH} \wtd\Sigma^{\HH} U_-^{\HH}[-\Lambda_-]U_-\wtd\Sigma\Big)}_{=:\tau_2}
		+\underbrace{\trace\Big( V_-[-\what \Lambda_-]V_-^{\HH}\wtd\Sigma U_+^{\HH}\Lambda_+U_+\wtd\Sigma^{\HH}\Big)}_{=:\tau_3} \\
		&\hspace{3cm}+\underbrace{\trace\Big(V_-[-\what \Lambda_-]V_-^{\HH}(I + \wtd\Sigma^{\HH}\wtd\Sigma )^{1/2}U_-^{\HH}[-\Lambda_-]U_-(I + \wtd\Sigma^{\HH}\wtd\Sigma )^{1/2}\Big)}_{=:\tau_4}
	\Bigg]
	\\&\ge
	\inf_{0\preceq\Sigma\in \group D_{n_-}\atop  U_+,V_+\in \group U_{n_+}}\tau_1
	+\inf_{0\preceq\Sigma\in \group D_{n_-}\atop  U_-\in \group U_{n_-},V_+\in \group U_{n_+}}\tau_2
	+\inf_{0\preceq\Sigma\in \group D_{n_-}\atop  U_+\in \group U_{n_+},V_-\in \group U_{n_-}}\tau_3
	+\inf_{0\preceq\Sigma\in \group D_{n_-}\atop  U_-,V_-\in \group U_{n_-}}\tau_4
	.
\end{align*}
Next we bound these four infima from below. By \Cref{lm:gen:full} (with $X=(I + \wtd\Sigma\wtd\Sigma^{\HH} )^{1/2}$),
we have
\begin{align*}
	\inf_{0\preceq\Sigma\in \group D_{n_-}\atop  U_+,V_+\in \group U_{n_+}}	\tau_1	
	&\ge \inf_{ U_{\pm},V_{\pm}\in \group P_n}\trace( V_+\what \Lambda_+V_+^{\HH}U_+^{\HH}\Lambda_+U_+)
	\\&= \sum_{i=1}^{n_+} \lambda_i^{\downarrow}(\what \Lambda_+)\lambda_i^{\uparrow}(\Lambda_+)
	,
\end{align*}
and, again by  \Cref{lm:gen:full} (with $X=\begin{bsmallmatrix}
	0 & \\ & \Sigma	
\end{bsmallmatrix}$),
\begin{align*}
	\inf_{0\preceq\Sigma\in \group D_{n_-}\atop  U_-\in \group U_{n_-},V_+\in \group U_{n_+}}\tau_2
	&\ge
	\inf_{0\preceq\Sigma\in \group D_{n_-}\atop  \wtd U_-,V_+\in \group U_{n_+}}\trace( V_+\what \Lambda_+V_+^{\HH} \begin{bsmallmatrix}
		0 & \\ & \Sigma	
		\end{bsmallmatrix} \wtd U_-^{\HH}\begin{bsmallmatrix}
		0 & \\ &-\Lambda_-
		\end{bsmallmatrix}\wtd U_-\begin{bsmallmatrix}
		0 & \\ & \Sigma	
	\end{bsmallmatrix})
	\\
	&\ge 0.
\end{align*}
Similarly, we can bound $\tau_3$ and $\tau_4$ from below.
Put all together to get
\begin{align*}
	\inf_{X^{\HH}JX=J
	}\trace(\what \Lambda  X^{\HH}\Lambda X)
	\ge \sum_{i=1}^{n_+} \lambda_i^{\downarrow}(\what \Lambda_+)\lambda_i^{\uparrow}(\Lambda_+)
	+\sum_{j=1}^{n_-} \lambda_j^{\downarrow}(\what \Lambda_-)\lambda_j^{\uparrow}(\Lambda_-)	.
\end{align*}
Since the right-hand side is achieved by $\trace(\what \Lambda  X^{\HH}\Lambda X)$ at $\Sigma=0$, and $U_{\pm},V_{\pm}\in \group P_{n_{\pm}}$ such that the diagonal values of $\Lambda_{\pm}$ and $\what \Lambda_{\pm}$ are in the increasing and decreasing order respectively, we conclude that
\[
	\min_{X^{\HH}JX=J
	}\trace(\what \Lambda  X^{\HH}\Lambda X)
	=
	\sum_{i=1}^{n_+} \lambda_i^{\downarrow}(\what \Lambda_+)\lambda_i^{\uparrow}(\Lambda_+)+\sum_{j=1}^{n_-} \lambda_j^{\downarrow}(\what \Lambda_-)\lambda_j^{\uparrow}(\Lambda_-)
	.
\]
For general positive semidefinite pairs $(\Lambda,J),(\what \Lambda,J)$, we can apply what we just proved to the first term
in \cref{eq:inf(shiftLambda)} and then simplify.

We summarize what we just proved into \Cref{lm:J-unitary:equal}.

\begin{lemma}\label{lm:J-unitary:equal}
	Given Hermitian matrix pairs $(A,B)$ and $(\what A,\what B)$ with nonsingular $B,\what B\in \bbC^{n\times n}$,
	suppose that both pairs are congruent-diagonalizable and that
	$A\ne \mu B$ for any $\mu\in \bbR$, and $\what A\ne\what \mu\what B$ for any $\what\mu\in \bbR$. 
	Then
	\[
		\inf\limits_{\what BX^{\HH}BX=I_{n}}\trace(\what \Lambda  X^{\HH}\Lambda X)>-\infty
	\]
	if and only if either both $(A,B)$ and $(\what A,\what B)$ are positive semidefinite pairs
	or both are negative semidefinite pairs.
	Moreover, in the first case, i.e., when both $(A,B)$ and $(\what A,\what B)$ are positive semidefinite pairs,
	\begin{equation}\label{eq:J-unitary:equal}
		\min_{\what BX^{\HH}BX=I_{\what n}}\trace(\what A X^{\HH}AX)
		=
		\sum_{i=1}^{\inertia_+(B)} \lambda_i^{+\downarrow}(\what A,\what B)\lambda_i^{+\uparrow}(A,B)+\sum_{j=1}^{\inertia_-(B)} \lambda_j^{-\downarrow}(\what A,\what B)\lambda_j^{-\uparrow}(A,B)
		.
	\end{equation}
	A similar expression for the infimum can be gotten for the case when
	both $(A,B)$ and $(\what A,\what B)$ are negative semidefinite pairs by applying \cref{eq:J-unitary:equal}
	to $(-A,-B)$ and $(-\what A,-\what B)$.
\end{lemma}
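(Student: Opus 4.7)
My plan is to reduce the constrained trace minimization to an equivalent problem over $J$-unitary matrices via congruence, then split into a divergence argument (when the required semidefiniteness fails) and a matching lower bound plus explicit attainer (when it holds).

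\textbf{Reduction via canonical form.} Since $(A,B)$ and $(\what A,\what B)$ are congruent-diagonalizable and $B,\what B$ are nonsingular, write $B=Y^{\HH}JY$, $A=Y^{\HH}\Lambda Y$, $\what B=\what Y^{\HH}\what J\what Y$, $\what A=\what Y^{\HH}\what\Lambda\what Y$ as in \cref{eq:congr-trans,eq:JLambda:the-simple-case}. The constraint $\what B X^{\HH}BX=I_n$ gives $X^{\HH}BX=\what B^{-1}$ with $X$ nonsingular, so Sylvester's law yields $\inertia_\pm(\what B)=\inertia_\pm(B)$, hence $\what J=J$. Substituting $\wtd X=YX\what Y^{\HH}$, the problem becomes $\inf_{\wtd X^{\HH}J\wtd X=J}\trace(\what\Lambda\wtd X^{\HH}\Lambda\wtd X)$ over $J$-unitary matrices.

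\textbf{Divergence.} Suppose $(\Lambda,J)$ and $(\what\Lambda,J)$ are not both positive semidefinite and not both negative semidefinite. By \Cref{lm:LambdaJ>=0}, together with $\Lambda\ne\mu J$ and $\what\Lambda\ne\what\mu J$, there exist $\lambda_i^+\in\eig(\Lambda_+)$, $\lambda_j^-\in\eig(\Lambda_-)$, $\what\lambda_{\what i}^+\in\eig(\what\Lambda_+)$, $\what\lambda_{\what j}^-\in\eig(\what\Lambda_-)$ with $(\what\lambda_{\what i}^+-\what\lambda_{\what j}^-)(\lambda_i^+-\lambda_j^-)<0$. Parametrize $\wtd X$ by the ChSh decomposition (\Cref{lm:chsh-decomposition}) with rank-one $\Sigma=\sigma\be_1\be_1^{\T}$ and permutation $U_\pm,V_\pm$ aligning these four eigenvalues into the $(1,1)$ entries of their respective blocks. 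The trace then simplifies to $(\what\lambda_{\what i}^+-\what\lambda_{\what j}^-)(\lambda_i^+-\lambda_j^-)\sigma^2+\text{(constant)}$, which tends to $-\infty$ as $\sigma\to\infty$.

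\textbf{Convergence and formula.} Suppose both pairs are positive semidefinite (the negative case follows by negating $A,B,\what A,\what B$). Pick $\lambda_0,\what\lambda_0\in\bbR$ with $\Lambda-\lambda_0 J\succeq 0$ and $\what\Lambda-\what\lambda_0 J\succeq 0$. Using $\wtd X^{\HH}J\wtd X=J$, rewrite the objective as in \cref{eq:inf(shiftLambda)}, i.e., $\trace([\what\Lambda-\what\lambda_0 J]\wtd X^{\HH}[\Lambda-\lambda_0 J]\wtd X)$ plus an $X$-independent constant. This reduces the analysis to the case $\Lambda_\pm,\what\Lambda_\pm\succeq 0$. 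Expanding via ChSh into four trace pieces $\tau_1,\dots,\tau_4$, apply \Cref{lm:gen:full} (built on \Cref{lm:elfr1995} and Birkhoff's theorem) to each: $\tau_1$ and $\tau_4$ are bounded below by $\sum_i\lambda_i^\downarrow(\what\Lambda_\pm)\lambda_i^\uparrow(\Lambda_\pm)$, while $\tau_2,\tau_3\ge 0$ are trivially bounded by zero. Taking $\Sigma=0$ together with permutations $U_\pm,V_\pm\in\group P_{n_\pm}$ that sort the diagonals of $\Lambda_\pm$ and $\what\Lambda_\pm$ oppositely achieves the sum of the four lower bounds simultaneously, so the infimum is attained. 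Undoing the shift rewrites this sum in terms of the $\pm$-type eigenvalues of $(A,B)$ and $(\what A,\what B)$, yielding \cref{eq:J-unitary:equal}.

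\textbf{Main obstacle.} The delicate part is the simultaneous attainment of all four ChSh lower bounds at $\Sigma=0$ and the same pair of sorting permutations. The shift to $\Lambda_\pm,\what\Lambda_\pm\succeq 0$ is essential: without it, $\tau_2$ and $\tau_3$ could become arbitrarily negative and the naive ``minimize at $\Sigma=0$'' intuition would fail. Equally, the divergence argument depends on a careful choice of just four scalar eigenvalues and a rank-one $\Sigma$, isolating the sign-conflict coefficient.
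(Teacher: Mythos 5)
Your proposal follows the paper's own proof essentially step for step: the same congruence reduction to $J$-unitary matrices, the same ChSh-decomposition divergence argument with rank-one $\Sigma$ and permutations when semidefiniteness fails, and the same shift-then-split into $\tau_1,\dots,\tau_4$ bounded via \Cref{lm:gen:full} with attainment at $\Sigma=0$. It is correct and requires no further comment.
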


\Cref{lm:J-unitary:equal} is a special case of \Cref{thm:J-unitary:total}, and it with
$B=\what B=I_n$ yields \Cref{lm:unitary:full}.

\section{The general case}\label{sec:the-general-case}
In this section we prove \Cref{thm:J-unitary:total} in its generality.
We will assume that $B$ is genuinely indefinite, except in \Cref{rk:definite} later where we will comment on
how the case when $B$ is positive or negative semidefinite can be handled in a simpler way.

We still have the decompositions in \cref{eq:congr-trans} and  simplification in \cref{eq:simplified}, with $(\Lambda,J)$ to be specified
as in \Cref{lm:HCF} and similarly for $(\what\Lambda,\what J)$.
%

\begin{lemma}[{\cite[Theorem 5.1]{laro:2005}}]\label{lm:HCF}
	Let $p$ be a positive integer and
	\begin{equation*}\label{eq:HCF:block}
		K_p(\tau)=
		\begin{bmatrix}
			&   &         &   & & \tau \\
			&   &         &   & \tau & 1 \\
			&   &         & \iddots  & 1 &  \\
			&   & \iddots & \iddots  &  \\
			& \tau &  1       &   &  \\
			\tau & 1  &         &   &
		\end{bmatrix}_{p\times p},
		\qquad
		F_p=
		\begin{bmatrix}
			&   &   &      &   & 1 \\
			&   &  &       & 1 &  \\
			&   & &\iddots &   &  \\
			&   & \iddots &   &  \\
			& 1 &         &&   &  \\
			1 &   &       &  &   &
		\end{bmatrix}_{p\times p}.
	\end{equation*}
	Any Hermitian matrix pair $(A,B)$ is congruent to $(\Lambda,J)$ as a direct sum of possible block pairs of types:
	\begin{description}
		\item[T-o:] $(0,0)$,
		\item[T-s($2p+1$):] $\left(K_{2p+1}(0),\begin{bmatrix}
					&& F_{p}\\
					&0 &\\
					F_p &&\\
			\end{bmatrix}\right)$,
		\item[T-$\infty$($p$):] $(\eta F_p, \eta K_p(0))$ with $\eta\in\set{\pm1}$, associated with an infinite eigenvalue,
		\item[T-c($p$):] $(\begin{bmatrix}
				0 & K_p(\alpha+\ii\beta)\\ K_p(\alpha-\ii\beta) & \\
			\end{bmatrix},F_{2p})$, associated with a pair of genuine conjugate complex eigenvalues
			$\alpha\pm\ii\beta$ with $\alpha\in \mathbb{R}, \beta>0$,
		\item[T-r($p$):] $(\eta K_p(\alpha), \eta F_p)$  with $\eta\in\set{\pm1}$, 
			associated with a finite real eigenvalue $\alpha$.
	\end{description}
	Moreover, $(\Lambda,J)$ is unique
	up to a simultaneous permutation of the corresponding diagonal block pairs.
\end{lemma}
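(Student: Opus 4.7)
The plan is to establish existence and uniqueness of the canonical form separately, first reducing $A-\lambda B$ to the (non-Hermitian) Kronecker canonical form under strict equivalence $P(A-\lambda B)Q$ with arbitrary invertible $P,Q$, and then upgrading the reduction to a congruence by exploiting the Hermitian symmetry of $(A,B)$. This two-step strategy is natural because the Kronecker invariants (minimal indices, Segre characteristics at each finite or infinite eigenvalue) are the coarse data of the pencil, while the congruence refinement only introduces finitely many additional sign parameters $\eta=\pm1$ that will be seen as signatures.

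For existence, I would begin by applying the Kronecker canonical form theorem to $A-\lambda B$, decomposing it into a direct sum of singular blocks of types $L_\epsilon$ and $L_\epsilon^{\T}$, regular Jordan blocks at finite eigenvalues, and Jordan blocks at infinity. Hermitianness enforces several matchings: the left and right minimal indices coincide, so each $L_p$ pairs with an $L_p^{\T}$; Jordan structures at non-real $\mu$ mirror those at $\bar\mu$; and the generalized eigenspaces at distinct eigenvalues are $B$-orthogonal. I would then treat each matched component in turn. A pair $L_p\oplus L_p^{\T}$ has total dimension $2p+1$, and a direct computation produces an explicit congruence taking it to the \block{s}{2p+1} form. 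For a finite real eigenvalue $\alpha$ of the regular part, I pass to the generalized eigenspace, on which $B$ restricts to a nondegenerate Hermitian form and $A$ acts as a $B$-selfadjoint operator; the structure theorem for selfadjoint operators on indefinite inner product spaces then yields a $B$-orthogonal decomposition into cyclic subspaces, on each of which a suitably scaled Jordan chain realizes the pair $(\eta K_p(\alpha), \eta F_p)$ with $\eta=\pm1$. Complex conjugate eigenvalue pairs $\alpha\pm\ii\beta$ combine into the \block{c}{p} form with no sign ambiguity, since interchanging the two conjugate chains absorbs the would-be sign. The infinite eigenvalue case follows by swapping the roles of $A$ and $B$, applying the finite-eigenvalue argument at $0$ to the reverse pencil $B-\mu A$.

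For uniqueness, the Kronecker data are already canonical invariants of the pencil under strict equivalence, hence a fortiori under congruence. It remains to pin down the sign $\eta$ attached to each real or infinite Jordan block. This follows from a signature argument: for $\lambda_0\in\bbR$ not in $\eig(A,B)$, the matrix $A-\lambda_0 B$ is nonsingular, and its inertia decomposes as a sum over cyclic invariant components; the contribution of a single \block{r}{p} block depends only on $\eta$ and $p$, so $\eta$ is recovered via Sylvester's law of inertia. The infinite case is analogous via the reversed pencil, while \block{s}{2p+1} and \block{c}{p} carry no sign degrees of freedom because, in the complex conjugate case, relabeling the two conjugate chains flips the sign.

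The main obstacle is the congruence normalization inside a single regular cyclic block. Given a Jordan chain $v_1,\dots,v_p$ satisfying $(A-\alpha B)v_1=0$ and $(A-\alpha B)v_{i+1}=Bv_i$, the chain relations already force the Gram matrix $G=[v_i^{\HH}Bv_j]$ to be anti-triangular, but making it coincide with $\eta F_p$ requires careful rescaling of $v_1$ and the addition of lower-chain vectors to clean out entries below the antidiagonal. The crux is to show that the top-right entry $v_1^{\HH}Bv_p$ is a nonzero real number: nonzero because the cyclic component is nondegenerate for the Hermitian form $B$, and real because $G$ is Hermitian and anti-triangular. Its sign then defines $\eta$, and a final rescaling yields both $G=\eta F_p$ and $[v_i^{\HH}Av_j]=\eta K_p(\alpha)$ simultaneously. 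Packaging these cyclic normalizations into a single block-diagonal congruence across all eigenvalues completes existence, and the signature invariants pinned down above complete uniqueness.
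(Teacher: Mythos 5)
The paper itself offers no proof of this lemma: it is imported wholesale from Lancaster and Rodman \cite[Theorem~5.1]{laro:2005}, so your sketch can only be measured against the standard proofs in that literature. Your existence half follows that standard route (Kronecker canonical form under strict equivalence, then a congruence normalization of Jordan chains on the regular part via $B$-selfadjoint structure theory, with $\eta$ arising from the sign of the real nonzero number $v_1^{\HH}Bv_p$, conjugate chains absorbing the sign in the \block{c}{$p$} case, and the reversed pencil handling the infinite eigenvalue), and the Gram-matrix normalization you single out is indeed the crux. One step you pass over too quickly is that separating the singular part (the \blocko{} and \block{s}{$2p+1$} blocks) from the regular part must itself be achieved by a \emph{congruence}, not merely by strict equivalence, before the $B$-selfadjoint machinery applies; the pairing of each $L_p$ with an $L_p^{\T}$ does not by itself give the required $B$-orthogonal decoupling, and this is one of the longer arguments in the cited source.

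The uniqueness half, however, has a genuine gap: the signs $\eta$ are \emph{not} recoverable from the inertia of $A-\lambda_0B$, so the proposed Sylvester-type argument cannot pin down the sign characteristic. Concretely, fix $\alpha\in\bbR$ and compare the $4\times4$ pairs $(\Lambda_1,J_1)=\big(\diag(\alpha,\,-K_3(\alpha)),\,\diag(1,\,-F_3)\big)$ (blocks \block{r}{1} with $\eta=+1$ and \block{r}{3} with $\eta=-1$ at the same eigenvalue $\alpha$) and $(\Lambda_2,J_2)=\big(\diag(-\alpha,\,K_3(\alpha)),\,\diag(-1,\,F_3)\big)$ (the two signs interchanged). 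Since the inertia of $K_3(\tau)$ is $(2,0,1)$ for $\tau>0$, $(1,0,2)$ for $\tau<0$ and $(1,1,1)$ for $\tau=0$, one checks that $\Lambda_1-\lambda_0J_1$ and $\Lambda_2-\lambda_0J_2$ have identical inertia for \emph{every} $\lambda_0\in\bbR$, namely $(2,0,2)$ for $\lambda_0\neq\alpha$ and $(1,2,1)$ at $\lambda_0=\alpha$, and $J_1,J_2$ both have inertia $(2,0,2)$; yet the two pairs carry different multisets of (size, sign) data and hence, by the uniqueness assertion itself (proved in the literature by finer means), are not congruent. So no inertia count of $A-\lambda_0B$, at one point or as a function of $\lambda_0$, distinguishes them, and your argument would wrongly conclude the signs are determined. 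A smaller slip compounds this: when the pencil is singular --- exactly when \blocko{} or \block{s}{$2p+1$} blocks occur, which the lemma allows --- $A-\lambda_0B$ is singular for all $\lambda_0$, so the premise ``nonsingular for $\lambda_0\notin\eig(A,B)$'' already fails. The actual uniqueness proofs determine the sign characteristic from finer invariants attached to the Jordan chains (e.g., signatures of Hermitian forms built from powers of $A-\alpha B$ against $B$, or a continuity/connectedness argument), and your proof needs such an ingredient to be complete.
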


Although \Cref{lm:HCF} lists five possible types of block pairs that each of $(\Lambda, J)$ and $(\what\Lambda,\what J)$ may contain, we can quickly exclude
some types of block pairs from $(\Lambda, J)$ and $(\what\Lambda,\what J)$ based on the nature of our problem.
\begin{itemize}
	\item $(\what \Lambda,\what J)$ possibly contains
		block pairs of types \block{c}{$p$} and \block{r}{$p$} only.
		This is because $\what B$ is nonsingular and so is $\what J$, by \Cref{lm:HCF}, and hence
		block pairs of type \blocko{}, \block{s}{$2p+1$}, or \block{$\infty$}{$p$} do not
		show up in pair $(\what \Lambda,\what J)$.  For that reason,
		we will have $\what J^{-1}=\what J$ and hence constraint $\what JX^{\HH}JX=I_{\what n}$ is equivalent to
		$X^{\HH}JX=\what J$. It follows from \cref{eq:simplified} that
		\begin{equation}\label{eq:simplified'}
			\inf_{\what BX^{\HH}BX=I_{\what n}}\trace(\what A X^{\HH}AX)
			=\inf_{ \what JX^{\HH}J X=I_{\what n}}\trace(\what \Lambda X^{\HH}\Lambda X)
			=\inf_{ X^{\HH}J X=\what J}\trace(\what \Lambda X^{\HH}\Lambda X).
		\end{equation}
		In the rest of this section, we will investigate the last infimum in \cref{eq:simplified'}.
	\item We can also exclude block pairs of type \blocko{} from $(\Lambda, J)$. In fact, if  $(\Lambda, J)$
		contains block pairs of type \blocko{}, then we can write
		$\Lambda=\begin{bmatrix}
			\Lambda_r & \\ & 0
			\end{bmatrix},J=\begin{bmatrix}
			J_r & \\ & 0
		\end{bmatrix}$,
		and partition $X=\begin{bmatrix}
			X_r \\ X_s
		\end{bmatrix}$ accordingly
		to get
		\[
			\inf_{X^{\HH}JX=\what J}\trace(\what \Lambda  X^{\HH}\Lambda X)
			=\inf_{X_r^{\HH}J_rX_r=\what J}\trace(\what \Lambda  X_r^{\HH}\Lambda_r X_r),
		\]
		which falls into the case that $(\Lambda,J)$ contains no  block pair of type \blocko{}.
\end{itemize}
In summary, possible types of  block pairs to consider forward in $(\Lambda, J)$ and $(\what \Lambda,\what J)$
are
\begin{subequations}\label{eq:BP-possible}
	\begin{align}
		(\Lambda, J):&\quad\text{\block{s}{$2p+1$}, \block{$\infty$}{$p$}, \block{c}{$p$}, \block{r}{$p$}};
		\label{eq:BP-possible-1}\\
		(\what \Lambda,\what J): &\quad\text{\block{c}{$p$}, \block{r}{$p$}}. \label{eq:BP-possible-2}
	\end{align}
\end{subequations}
In our later analysis, we will also replace any  block pair of type \block{c}{$p$} with
\begin{equation}\label{eq:T-c(p)->diagB}
	\left(\begin{bmatrix}
			K_p(\alpha) & -\ii\beta F_p\\ \ii\beta F_p & -K_p(\alpha)
			\end{bmatrix},\begin{bmatrix}
			F_p & \\ & -F_p
	\end{bmatrix}\right).
\end{equation}
This is because they are congruent:
\begin{align*}
	\frac{1}{\sqrt 2}\begin{bmatrix}
		I & I \\ I & -I
		\end{bmatrix}\cdot\begin{bmatrix}
		0 & K_p(\alpha+\ii\beta)\\ K_p(\alpha-\ii\beta) & \\
		\end{bmatrix}\cdot\frac{1}{\sqrt 2}\begin{bmatrix}
		I & I \\ I & -I
	\end{bmatrix}
	&
	=\begin{bmatrix}
		K_p(\alpha) & -\ii\beta F_p\\ \ii\beta F_p & -K_p(\alpha)
	\end{bmatrix}
	,\\
	\frac{1}{\sqrt 2}\begin{bmatrix}
		I & I \\ I & -I
		\end{bmatrix}\cdot\begin{bmatrix}
		& F_p\\ F_p&
		\end{bmatrix}\cdot\frac{1}{\sqrt 2}\begin{bmatrix}
		I & I \\ I & -I
	\end{bmatrix}
	&
	=\begin{bmatrix}
		F_p & \\ & -F_p
	\end{bmatrix}
	.
\end{align*}

%

\begin{remark}\label{rk:LambdaJ-PD}
	When $(A,B)$ is positive semidefinite, possible block pairs in its canonical form are considerably limited
	\cite[Lemma~3.8]{lilb:2013}.
	In fact, if $A-\lambda_0 B\succeq 0$ for some $\lambda_0\in\bbR$, then its canonical form possibly contains
	(0,0) of type \blocko{}, $(\eta K_1(\alpha),\eta F_1)$ of type \block{r}{$1$} such that
	$\eta(\alpha-\lambda_0)\ge 0$,
	$(K_2(\lambda_0), F_2)$ of type \block{r}{$2$},
	$(1,0)$ of type \block{$\infty$}{$1$}.
\end{remark}

The next lemma will be used in \cref{ssec:BP:case1} to reduce the case $n>\what n$ 
to the case  $n=\what n$. It may be of interest in its own and it also sheds light on why 
\Cref{dfn:proper} reads the way it is.

\begin{lemma}\label{lm:NonSq->Sq}
	Let $B\in\bbC^{n\times n},\,\what A,\,\what B\in\bbC^{\what n\times\what n}$ be Hermitian matrices.
	Suppose that $B$ and $\what B$ are nonsingular, and
	$\what n_{\pm}:=\inertia_{\pm}(\what B)\le n_{\pm}:=\inertia_{\pm}(B)$, and let
	\[
		\wtd A=\begin{bmatrix}
			\what A & \\ & 0
		\end{bmatrix}\in\bbC^{n\times n}, \quad
		J_c=\begin{bmatrix}
			I_{n_+-\what n_+} & \\ & -I_{n_--\what n_-}
		\end{bmatrix}, \quad
		\wtd B=\begin{bmatrix}
			\what B & \\ & J_c
		\end{bmatrix}.
	\]
	\begin{enumerate}[{\rm (i)}]
		\item If $(\wtd A,\wtd B)\succeq 0$, then $(\what A,\what B)\succeq 0$ and
			$(B,\what A,\what B)$ is proper; conversely, if $(\what A,\what B)\succeq 0$ and
			$(B,\what A,\what B)$ is proper, then $(\wtd A,\wtd B)\succeq 0$.
		\item If $(\wtd A,\wtd B)\preceq 0$, then $(\what A,\what B)\preceq 0$ and
			$(-B,-\what A,-\what B)$ is proper; conversely, if $(\what A,\what B)\preceq 0$ and
			$(-B,-\what A,-\what B)$ is proper, then $(\wtd A,\wtd B)\preceq 0$.
	\end{enumerate}
\end{lemma}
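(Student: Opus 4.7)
The plan is to reduce both directions of (i) to a single existence question: $(\wtd A,\wtd B)\succeq 0$ iff there is $\lambda_0\in\bbR$ with $\wtd A-\lambda_0\wtd B\succeq 0$, and then exploit the block-diagonal form. Concretely,
\[
  \wtd A-\lambda_0\wtd B=\begin{bmatrix}\what A-\lambda_0\what B & \\ & -\lambda_0 J_c\end{bmatrix},
\]
so this is PSD iff \emph{both} $\what A-\lambda_0\what B\succeq 0$ and $-\lambda_0 J_c\succeq 0$. The second condition is transparent: $-\lambda_0 J_c=\diag(-\lambda_0 I_{n_+-\what n_+},\,\lambda_0 I_{n_--\what n_-})$, so it forces $\lambda_0\le 0$ when $n_+>\what n_+$, $\lambda_0\ge 0$ when $n_->\what n_-$, $\lambda_0=0$ when both strict inequalities hold, and imposes no constraint when $n_\pm=\what n_\pm$. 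These four alternatives on the inertia pattern match exactly the four cases (i)--(iv) of \Cref{dfn:proper}.

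Next I would combine this with the description of valid $\lambda_0$ for the top-left block. Once $(\what A,\what B)\succeq 0$, formula \cref{eq:PSD-intvl} tells us that $\what A-\lambda_0\what B\succeq 0$ holds for exactly $\lambda_0\in[\lambda_1^{-\downarrow}(\what A,\what B),\lambda_1^{+\uparrow}(\what A,\what B)]$. Intersecting this closed interval with the sign constraint from $J_c$ yields a nonempty feasible set precisely under the extra endpoint inequality listed in the corresponding case of \Cref{dfn:proper}: case (ii) asks the interval to meet $[0,\infty)$, which is exactly $\lambda_1^{+\uparrow}(\what A,\what B)\ge 0$; case (iii) asks it to meet $(-\infty,0]$, which is $\lambda_1^{-\downarrow}(\what A,\what B)\le 0$; case (iv) asks $0$ to lie in the interval, which is both; case (i) is vacuous. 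This equivalence drives both directions of (i). For the forward direction, PSDness of the full block-diagonal pair forces PSDness of its top-left block (hence $(\what A,\what B)\succeq 0$) and places $\lambda_0$ in the required intersection (hence $(B,\what A,\what B)$ is proper). For the converse, propriety guarantees a feasible $\lambda_0$, and any such choice yields $\wtd A-\lambda_0\wtd B\succeq 0$.

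Part (ii) of the lemma then follows by applying (i) to $(-B,-\what A,-\what B)$, using $(C,D)\preceq 0\Leftrightarrow(-C,-D)\succeq 0$ and noting that negation converts the block-diagonal data $(\wtd A,\wtd B)$ into its own negative. The main obstacle I anticipate is the bookkeeping in the case analysis---making sure the four sign regimes on $\lambda_0$ coming from $J_c$ correspond endpoint-by-endpoint to the extra inequalities involving $\lambda_1^{+\uparrow}(\what A,\what B)$ and $\lambda_1^{-\downarrow}(\what A,\what B)$ in \Cref{dfn:proper}; once that matching is set up carefully, the rest is purely formal and the proper index pair $(\properind_+(\what B),\properind_-(\what B))$ plays no active role.
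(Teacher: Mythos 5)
Your argument is correct and is essentially the paper's own proof: both rest on the block-diagonal identity $\wtd A-\lambda_0\wtd B=\diag(\what A-\lambda_0\what B,\,-\lambda_0J_c)$, the resulting sign constraint on $\lambda_0$ from $J_c$, and the interval \cref{eq:PSD-intvl} of admissible shifts for $(\what A,\what B)$. The only difference is presentational — the paper enumerates the three nontrivial inertia subcases separately, whereas you package them as a single interval-intersection condition — and your closing observations (that the proper index pair is irrelevant here, and that (ii) follows by negation) match the paper as well.
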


\begin{proof}
	We will prove item (i) only. Item (ii) becomes item (i) by simply considering $(-\what A,-\what B)$ instead.
	No proof is necessary if $\what n=n$. Suppose that $\what n<n$. There are three subcases to consider:
	(1) both $\what n_{\pm}<n_{\pm}$, (2) $\what n_+<n_+$ and $\what n_-=n_-$, and (3) $\what n_+=n_+$ and $\what n_-<n_-$.

	Consider subcase (1). Suppose that $(\wtd A,\wtd B)\succeq 0$, i.e., $\wtd A-\lambda_0\wtd B\succeq 0$.
	Then $\what A-\lambda_0\what B\succeq 0$ and $-\lambda_0J_c\succeq 0$, implying
	$(\what A,\what B)\succeq 0$ and $\lambda_0=0$. That $\lambda_0=0$ implies that 
	$\lambda_1^{-\downarrow}(\what A,\what B)\le 0\le \lambda_1^{+\uparrow}(\what A,\what B)$ and thus $(B,\what A,\what B)$ is proper.
	Conversely, if $(\what A,\what B)\succeq 0$ and $(B,\what A,\what B)$ is proper, then by
	\Cref{dfn:proper}, we find that 
	$\lambda_1^{-\downarrow}(\what A,\what B)\le 0\le \lambda_1^{+\uparrow}(\what A,\what B)$ and hence
	$\what A-\lambda_0\what B\succeq 0$ for $\lambda_0=0$, i.e., $\what A\succeq 0$, and hence
	$\wtd A-\lambda_0\wtd B\succeq 0$, i.e., $(\wtd A,\wtd B)\succeq 0$.

	Consider subcase (2). Suppose that $(\wtd A,\wtd B)\succeq 0$, i.e., $\wtd A-\lambda_0\wtd B\succeq 0$.
	Then $\what A-\lambda_0\what B\succeq 0$ and $-\lambda_0J_c\succeq 0$, implying
	$(\what A,\what B)\succeq 0$ and $\lambda_0\le 0$.
	Hence $\lambda_1^{-\downarrow}(\what A,\what B)\le\lambda_0\le 0$ and
	$(B,\what A,\what B)$ is proper.
	Conversely, if $(\what A,\what B)\succeq 0$ and $(B,\what A,\what B)$ is proper, then by
	\Cref{dfn:proper}, we find that $\lambda_1^{-\downarrow}(\what A,\what B)\le 0$. By \cref{eq:PSD-intvl},
	$\what A-\lambda_0\what B\succeq 0$ for some $\lambda_0\le 0$ and hence $-\lambda_0J_c\succeq 0$ and
	$\wtd A-\lambda_0\wtd B\succeq 0$, i.e., $(\wtd A,\wtd B)\succeq 0$.

	Subcase (3) can be handled in the same way as handling subcase (2).
\end{proof}

We now prove \Cref{thm:J-unitary:total} in an order of increasing complexity
of $(\Lambda, J)$ and $(\what\Lambda,\what J)$ in terms of possible combinations of block pairs of types
listed in \cref{eq:BP-possible}, and
hence conclude the proof at the end.

\subsection{Involving block pairs of type \block{r}{$1$},\block{c}{$1$} only}
\label{ssec:BP:case1}
In this  case, we have
\begin{equation*}
	J=\begin{bmatrix}
		I_{n_+} &  \\
		& -I_{n_-}
	\end{bmatrix}, \quad
	\what J=\begin{bmatrix}
		I_{\what n_+} &  \\
		& -I_{\what n_-}
	\end{bmatrix}.
\end{equation*}
Recall $\what n_+\le n_+$ and $\what n_-\le n_-$ by \cref{eq:inertia(BtB)}.
Let $J_c=\begin{bmatrix}
	I_{n_+-\what n_+} & \\ & -I_{n_--\what n_-}
\end{bmatrix}$.
For any $X$ such that $X^{\HH}JX=\what J$, we can
complement $X$ to a square matrix $\wtd X=\begin{bmatrix}
	X & X_c
\end{bmatrix}$ such that $\wtd X^{\HH}J\wtd X=\diag(\what J,J_c)$ and then $(\wtd XP)^{\HH}J(\wtd XP)=J$ upon
permuting the columns of $\wtd X$ by some permutation matrix $P$. This is guaranteed by \Cref{lm:complement-basis:+} below
that can be found in many classical monographs, e.g., \cite{malcev1963foundation,gohbergLR2005indefinite}.

\begin{lemma}[{\cite[Corollary~5.12]{veselic2011damped}}]\label{lm:complement-basis:+}
	Let $J=\diag(I_{n_+},-I_{n_-})$ and $n=n_++n_-$. Any set vectors $\bu_1,\dots,\bu_k$ satisfying
	$\bu_i^{\HH}J\bu_j=\pm \delta_{ij}$ for $i,j=1,\dots,k$ can be complemented to a basis $\set{\bu_1,\dots,\bu_n}$ of $\bbC^n$ satisfying $\bu_i^{\HH}J\bu_j=\pm\delta_{ij}$ for $i,j=1,\dots,n$, where $\delta_{ij}$ is the Kronecker delta which is $1$ for $i=j$ and $0$ otherwise, and the numbers of $1$ and $-1$ among $\bu_i^{\HH}J\bu_i$ for $1\le i\le n$ are $n_+$ and $n_-$, respectively.
\end{lemma}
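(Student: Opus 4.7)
The plan is to exhibit the required extension by way of the $J$-orthogonal splitting of $\bbC^n$ induced by $U:=\mathrm{span}\,\set{\bu_1,\ldots,\bu_k}$ and its $J$-orthogonal complement $U^{[\perp]}:=\set{\bs{v}\in\bbC^n:\bu_i^{\HH}J\bs{v}=0 \text{ for all } i=1,\ldots,k}$, and then to produce a $J$-orthonormal basis of $U^{[\perp]}$ via Sylvester's law of inertia applied to the Gram matrix of an arbitrary basis. The main obstacle to beware of is that a naive one-vector-at-a-time Gram--Schmidt procedure in an indefinite form can stall at a $J$-isotropic intermediate vector, so we will diagonalize the entire restricted form at once rather than proceeding iteratively.

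First I would observe that the Gram matrix $G:=[\bu_i^{\HH}J\bu_j]_{i,j=1}^{k}$ is the diagonal signature matrix $\diag(\varepsilon_1,\ldots,\varepsilon_k)$ with $\varepsilon_i\in\set{\pm 1}$, and is therefore nonsingular. This forces $\bu_1,\ldots,\bu_k$ to be linearly independent (any nontrivial dependence would lie in the kernel of $G$) and yields $U\cap U^{[\perp]}=\set{0}$: indeed, if $\bs{v}=\sum_i c_i\bu_i\in U^{[\perp]}$ then $0=\bu_j^{\HH}J\bs{v}=\varepsilon_j c_j$ for every $j$, so all $c_j=0$. Since $U^{[\perp]}$ is the kernel of the linear map $\bs{v}\mapsto(\bu_1^{\HH}J\bs{v},\ldots,\bu_k^{\HH}J\bs{v})^{\T}$ from $\bbC^n$ to $\bbC^k$, which is surjective (taking $\bs{v}=\bu_j$ yields $\varepsilon_j$ times the $j$-th standard basis vector), it has dimension $n-k$, and hence $\bbC^n=U\oplus U^{[\perp]}$.

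Next I would check that $J$ restricts to a nondegenerate Hermitian form on $U^{[\perp]}$ with inertia $(n_+-p,\,n_--q)$, where $p$ (respectively $q$) is the number of $+1$ (respectively $-1$) among $\varepsilon_1,\ldots,\varepsilon_k$, so that $p+q=k$. Nondegeneracy is immediate from the splitting $\bbC^n=U\oplus U^{[\perp]}$ together with the nondegeneracy of $J$ on $\bbC^n$. For the inertia count, choosing any basis of $\bbC^n$ adapted to the splitting exhibits $J$ in block-diagonal form with one block equal to $\diag(\varepsilon_1,\ldots,\varepsilon_k)$ of inertia $(p,q)$ and the other block representing the restriction of $J$ to $U^{[\perp]}$; Sylvester's law then forces the two inertias to sum to $(n_+,n_-)$, pinning the second at $(n_+-p,\,n_--q)$.

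Finally I would pick any basis $\bs{v}_1,\ldots,\bs{v}_{n-k}$ of $U^{[\perp]}$, form the nonsingular Hermitian matrix $H:=[\bs{v}_i^{\HH}J\bs{v}_j]$ of now-known inertia $(n_+-p,\,n_--q)$, invoke Sylvester's law to obtain a nonsingular $S\in\bbC^{(n-k)\times(n-k)}$ with $S^{\HH}HS=\diag(I_{n_+-p},-I_{n_--q})$, and set $\bu_{k+j}:=\sum_i S_{ij}\bs{v}_i$ for $j=1,\ldots,n-k$. The enlarged list $\bu_1,\ldots,\bu_n$ is then a basis of $\bbC^n$ satisfying $\bu_i^{\HH}J\bu_j=\pm\delta_{ij}$ with exactly $n_+$ values of $+1$ and $n_-$ values of $-1$ on the diagonal, as required. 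Routing through Sylvester's law on the Gram matrix of an arbitrary basis of $U^{[\perp]}$ cleanly sidesteps the isotropy issue and delivers the prescribed signature automatically.
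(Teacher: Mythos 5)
Your proof is correct. The paper itself offers no argument for this lemma---it is quoted from \cite[Corollary~5.12]{veselic2011damped} and attributed to classical monographs---so there is nothing internal to compare against; your route is the standard one: the Gram matrix $\diag(\varepsilon_1,\dots,\varepsilon_k)$ is nonsingular, hence $\bbC^n=U\oplus U^{[\perp]}$ with $\dim U^{[\perp]}=n-k$ by rank--nullity, the block-diagonal Gram matrix of an adapted basis together with Sylvester's inertia theorem pins the inertia of $J$ restricted to $U^{[\perp]}$ at $(n_+-p,\,0,\,n_--q)$, and a single congruence diagonalization of that restriction produces the complementary $J$-orthonormal vectors. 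Your decision to diagonalize the restricted form all at once, rather than run an indefinite Gram--Schmidt that could hit an isotropic vector, is exactly the right way to make the argument airtight, and the inertia bookkeeping ($p+(n_+-p)=n_+$, $q+(n_--q)=n_-$) delivers the final count claimed in the lemma.
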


Set
\begin{equation}\label{eq:wtdLambdaJ}
	\wtd\Lambda=\begin{bmatrix}
		\what \Lambda & \\ & 0
	\end{bmatrix}\in\bbC^{n\times n}, \quad
	\wtd J=\begin{bmatrix}
		\what J & \\ & J_c
	\end{bmatrix}.
\end{equation}
It can be seen that
\begin{align*}
	\inf_{\wtd X^{\HH}J\wtd X=\wtd J}\trace(\wtd\Lambda\wtd X^{\HH}\Lambda \wtd X)
	&=\inf_{ X^{\HH}J X=\what J\atop X_c^{\HH}JX=0, X_c^{\HH}JX_c=J_c}\trace(\what \Lambda X^{\HH}\Lambda X)
	=\inf_{ X^{\HH}J X=\what J}\trace(\what \Lambda X^{\HH}\Lambda X).
\end{align*}
This and \Cref{lm:NonSq->Sq} show that we can consider $(\Lambda,J)$ and $(\wtd\Lambda,\wtd J)$ instead, 
for which $n=\what n$.

In the rest of this subsection, we will assume $n=\what n$. We consider three subcases:
\begin{enumerate}[(1)]
	\item Only  block pairs of type \block{r}{$1$} are involved; 
	\item $\what \Lambda= \what\mu\what J$ for some $\what \mu\in \bbR$;
	\item Besides possibly  block pairs of type \block{r}{$1$}, at least one  block pair of type \block{c}{$1$}
		is also involved and  $\what \Lambda\ne \what\mu\what J$ for any $\what \mu\in \bbR$.
\end{enumerate}
Subcase (1) has already been taken care of in \cref{sec:the-simple-case}.
Subcase (2) falls into the excluded cases of the theorem: $\what A\ne \what\mu\what B$ for any $\what \mu\in \bbR$ 
if $n=\what n$ to begin with, i.e., without
the expansions in \cref{eq:wtdLambdaJ}, or if with the expansions then 
$0=\what\mu J_c\,\Rightarrow\,\what\mu=0$, yielding $\what A=0$.

We now turn our attention
to subcase (3).
Now $J,\,\what J\in\bbC^{n\times n}$ are nonsingular, and $\inertia_{\pm}(J)=\inertia_{\pm}(\what J)$.
Notice that the direct sum of pairs of type \block{r}{$1$} is a diagonal pair, and
each block pair of type \block{c}{$1$} can be turned into \cref{eq:T-c(p)->diagB} for $p=1$ by a congruent transformation.
Thus we can assume
\begin{equation*}\label{eq:new-form}
	\Lambda=\begin{bmatrix}\Lambda_+ ^c  & -\ii  \Omega^c  \\ \ii  \Omega^c & -\Lambda_- ^c \end{bmatrix}, \,\,
	J=\begin{bmatrix}I_{n_+ } & \\ & -I_{n_- }\end{bmatrix}, \,\,
	\what \Lambda =\begin{bmatrix}\what \Lambda _+ ^c  & -\ii  \what{\Omega}^c  \\ \ii  \what{\Omega}^c  & -\what \Lambda _- ^c \end{bmatrix}, \,\,
	\what J=J, 
\end{equation*}
where  $\Lambda_{\pm}^c,\what \Lambda_{\pm}^c\in\group D$, and $\Omega^c,,\what \Omega^c\in\bbR^{n_+\times n_-}$ 
are leading diagonal matrices with nonnegative diagonal entries.
As a result,
\[
	\inf_{X^{\HH}JX=\what J}\trace(\what \Lambda  X^{\HH}\Lambda X)
	=\inf_{\Sigma\in\group D^+\atop U_{\pm},V_{\pm}\in \group U}\trace(\what \Lambda  X^{\HH}\Lambda X)
	\le \inf_{\Sigma\in\group D^+\atop U_{\pm},V_{\pm}\in \group P^u}\trace(\what \Lambda  X^{\HH}\Lambda X)
	.
\]
In a way similar to that in \cref{eq:J-unitary:half:inf}, we will
select concrete $U_{\pm}, V_{\pm} \in \group P^u$ to establish a necessary condition such
that the infimum is not $-\infty$.

First we consider the case $n=\what n=2$.
Note that
\[
	\begin{bmatrix}
		\sqrt{1+\sigma^2}&\sigma\\\sigma&\sqrt{1+\sigma^2}
		\end{bmatrix}\begin{bmatrix}
		\alpha & \\
		& -\alpha\\
		\end{bmatrix}\begin{bmatrix}
		\sqrt{1+\sigma^2}&\sigma\\\sigma&\sqrt{1+\sigma^2}
	\end{bmatrix}
	=\begin{bmatrix}
		\alpha & \\
		& -\alpha\\
	\end{bmatrix}
	.
\]
There are three mutually exclusive subcases:
\begin{enumerate}[(i)]
	\item both $(\what \Lambda,\what J)$ and $(\Lambda,J)$ are  block pairs of type \block{c}{$1$}. We have
		\begin{align*}
			&\inf_{\Sigma\in\group D^+\atop U_{\pm},V_{\pm}\in \group P^u}\trace(\what \Lambda  X^{\HH}\Lambda X)	\\
			&=\inf _{\sigma \ge 0 \atop \theta, \what{\theta} \in[0,2 \pi)}\trace\left(\begin{bsmallmatrix}
					\what \alpha& -\ii\what\beta\ee^{\ii\what\theta}\\
					\ii\what\beta\ee^{-\ii\what\theta}& -\what\alpha\\
					\end{bsmallmatrix}\begin{bsmallmatrix}
					\sqrt{1+\sigma^2}&\sigma\\\sigma&\sqrt{1+\sigma^2}
					\end{bsmallmatrix}\begin{bsmallmatrix}
					\alpha& -\ii\beta\ee^{\ii\theta}\\
					\ii\beta\ee^{-\ii\theta}& -\alpha\\
					\end{bsmallmatrix}\begin{bsmallmatrix}
					\sqrt{1+\sigma^2}&\sigma\\\sigma&\sqrt{1+\sigma^2}
			\end{bsmallmatrix}\right)
			\\& =\inf _{\sigma \ge 0 \atop \theta, \what{\theta} \in[0,2 \pi)} \beta \what\beta \left[(1+\sigma^2 )(\ee ^{\ii (\theta-\what{\theta})}+\ee ^{\ii (\what{\theta}-\theta)})-\sigma^2 (\ee ^{\ii (\theta+\what{\theta})}+\ee ^{-\ii (\what{\theta}+\theta)})\right]+2\alpha\what\alpha
			\\ &=\inf _{\sigma \ge 0 \atop \theta, \what{\theta} \in[0,2 \pi)} 2 \beta \what\beta \left[(1+\sigma^2 ) \cos (\theta-\what{\theta})-\sigma^2  \cos (\theta+\what{\theta})\right]+2\alpha\what\alpha
			\\ &=\inf _{\sigma \ge 0 \atop \theta, \what{\theta} \in[0,2 \pi)}  2 \beta \what\beta \left[\cos (\theta-\what{\theta})+2 \sigma^2  \sin \theta \sin \what{\theta}\right]+2\alpha\what\alpha
			\\ & =-\infty;
		\end{align*}
	\item
		$(\what \Lambda,\what J)$ is a  block pair of type \block{c}{$1$}
		and $(\Lambda,J)$ consists of two  pairs of type  \block{r}{$1$}.
		We have
		\begin{align*}
			&\inf_{\Sigma\in\group D^+\atop U_{\pm},V_{\pm}\in \group P^u}\trace(\what \Lambda  X^{\HH}\Lambda X)	\\
			&=\inf _{\sigma \ge 0 \atop \what{\theta} \in[0,2 \pi)}\trace\left(\begin{bsmallmatrix}
					\what\alpha& -\ii\what\beta\ee^{\ii\what\theta}\\
					\ii\what\beta\ee^{-\ii\what\theta}& -\what\alpha\\
					\end{bsmallmatrix}\begin{bsmallmatrix}
					\sqrt{1+\sigma^2}&\sigma\\\sigma&\sqrt{1+\sigma^2}
					\end{bsmallmatrix}\begin{bsmallmatrix}
					\lambda_+& \\
					& -\lambda_-\\
					\end{bsmallmatrix}\begin{bsmallmatrix}
					\sqrt{1+\sigma^2}&\sigma\\\sigma&\sqrt{1+\sigma^2}
			\end{bsmallmatrix}\right)
			\\&=\inf _{\sigma \ge 0 \atop \what{\theta} \in[0,2 \pi)} (\lambda_+ -\lambda_- ) \what\beta  \ii (\ee ^{-\ii  \what{\theta}}-\ee ^{\ii  \what{\theta}}) \sigma \sqrt{1+\sigma^2 }+\what\alpha(\lambda_++\lambda_-)
			\\&=\inf _{\sigma \ge 0 \atop  \what{\theta} \in[0,2 \pi)}  2(\lambda_+ -\lambda_- ) \what\beta  \sigma \sqrt{1+\sigma^2 } \sin \what{\theta}+\what\alpha(\lambda_++\lambda_-)
			\\& =-\infty,
		\end{align*}
		because $\lambda_+\ne \lambda_-$; otherwise $\Lambda=\lambda_+ J$ which has been excluded from subcase (3);
	\item
		$(\Lambda,J)$ is a  block pair of type \block{c}{$1$} and $(\what \Lambda,\what J)$
		consists of two  pairs of type  \block{r}{$1$}. This
		is similar to subcase (ii) we just considered, with the same conclusion: the infimum is $-\infty$.
\end{enumerate}

Consider, in general, $n=\what n>2$ and at least one  block pair of type \block{c}{$1$} is contained in $(\Lambda,J)$
or $(\what \Lambda,\what J)$ or both. Suppose for the moment that
$(\what \Lambda,\what J)$ contains a block pair of type \block{c}{$1$}
with $\what\alpha,\,\what\beta\in\bbR$ and $\what\beta>0$.
%
%
With the same reasoning we have employed in \cref{eq:J-unitary:half:inf},
picking $\Sigma=\sigma \be_1\be_1^{\T}$ and suitable permutation matrices $V_{\pm},U_{\pm}$ of apt sizes,
we get
\begin{align*}
	\MoveEqLeft[0]
	\inf_{\Sigma\in\group D^+\atop U_{\pm},V_{\pm}\in \group P^u}\trace(\what \Lambda  X^{\HH}\Lambda X)
	\\
	&=\inf_{\Sigma\in\group D^+\atop U_{\pm},V_{\pm}\in \group P^u}\trace\left(
		\begin{bsmallmatrix}
			V_+\what \Lambda_+^cV_+^{\HH} & -\ii V_+\what\Omega^c V_-^{\HH}\\
			\ii V_-\what\Omega^c V_+^{\HH}& -V_-\what \Lambda_-V_-^{\HH}\\
		\end{bsmallmatrix}
		\begin{bsmallmatrix}
			(I + \wtd\Sigma\wtd\Sigma^{\HH} )^{1/2} &  \wtd\Sigma \\
			\wtd\Sigma  & (I + \wtd\Sigma^{\HH}\wtd\Sigma )^{1/2}\\
	\end{bsmallmatrix} \times\right. \\
	&\hspace{3cm}\left.\begin{bsmallmatrix}
			U_+^{\HH}\Lambda_+^cU_+ & -\ii U_+^{\HH}\Omega^c U_-\\ \ii U_-^{\HH}\Omega^{c H} U_+& -U_-^{\HH}\Lambda_-^cU_-\\
			\end{bsmallmatrix}\begin{bsmallmatrix}
			(I + \wtd\Sigma\wtd\Sigma^{\HH} )^{1/2} &  \wtd\Sigma \\
			\wtd\Sigma  & (I + \wtd\Sigma^{\HH}\wtd\Sigma )^{1/2}\\
	\end{bsmallmatrix}  \right) \\
	&\le \inf_{\sigma>0}\trace\left(\begin{bsmallmatrix}
			\what \alpha & &-\ii\what \beta\ee^{\ii\what\theta}\\ & *\\\ii\what \beta\ee^{-\ii\what\theta} & & -\what\alpha \\ &&&*\\
			\end{bsmallmatrix}\begin{bsmallmatrix}
			(1 + \sigma^2 )^{1/2} & &  \sigma & \\
			& I & & 0\\
			\sigma &  & (1 + \sigma^2 )^{1/2} &\\
			& 0 && I\\
			\end{bsmallmatrix}\begin{bsmallmatrix}
			+ & &+\\ & *\\ +& & + \\ &&&*\\
			\end{bsmallmatrix}\begin{bsmallmatrix}
			(1 + \sigma^2 )^{1/2} & &  \sigma & \\
			& I & & 0\\
			\sigma &  & (1 + \sigma^2 )^{1/2} &\\
			& 0 && I\\
	\end{bsmallmatrix}\right)
	\\&\hspace{10em}\left(\text{where $\begin{bsmallmatrix}
				+ & +\\+& +
				\end{bsmallmatrix}$ is either $\begin{bsmallmatrix}
				\alpha &-\ii \beta\ee^{\ii\theta}\\\ii \beta\ee^{-\ii\theta} & -\alpha
				\end{bsmallmatrix}$ or $\begin{bsmallmatrix}
				\lambda_i & \\ & -\lambda_j
	\end{bsmallmatrix}$}\right)
	\\
	&= -\infty.
\end{align*}
In summary, the infimum is $-\infty$ as long as block pairs of type \block{c}{$1$} are involved, while if only \block{r}{$1$} block pairs are involved, it is turned into the case already considered in \cref{sec:the-simple-case}.

\subsection{Involving block pairs of types \block{r}{$1$},\block{c}{$1$}, and \block{$\infty$}{$1$}}
\label{ssec:BP:case2}
It suffices to consider the case that at least one pair of type \block{$\infty$}{$1$} is contained
in $(\Lambda,J)$ because the case of involving block pairs of types \block{r}{$1$} and \block{c}{$1$}
has already been dealt with in \cref{ssec:BP:case1} and our discussion prior to the subsection
excludes any possibility that $(\what\Lambda,\what J)$ may contain any block pair of type \block{$\infty$}{$1$}.
We write
\begin{equation}\label{eq:BP:case2:pf-0}
	\Lambda=\begin{bmatrix}
		\Lambda_r & \\ & \Lambda_{\infty}
	\end{bmatrix},\quad
	J=\begin{bmatrix}
		J_r & \\ & 0
	\end{bmatrix}, \quad
	X=\begin{bmatrix}
		X_r \\ X_{\infty}
	\end{bmatrix},
\end{equation}
where $\Lambda_{\infty}$ is diagonal with diagonal entries $\pm1$, to get
\begin{align}
	\inf_{X^{\HH}JX=\what J}\trace(\what \Lambda  X^{\HH}\Lambda X)
	&=\inf_{X_r^{\HH}J_rX_r=\what J}\trace\big(\what \Lambda\big[X_r^{\HH}\Lambda_r X_r
	+X_{\infty}^{\HH}\Lambda_{\infty} X_{\infty}\big]\big) \notag\\
	&=\inf_{X_r^{\HH}J_rX_r=\what J}\trace(\what \Lambda  X_r^{\HH}\Lambda_r X_r)
	+\inf_{X_{\infty}}\trace(\what \Lambda  X_{\infty}^{\HH}\Lambda_{\infty} X_{\infty})	.
	\label{eq:BP:case2:pf-1}
\end{align}

Consider the second term in \cref{eq:BP:case2:pf-1}, which is an infimum over $X_{\infty}$ without any constraint.
Without loss of generality, we may assume that $\what\Lambda$ is real diagonal; otherwise, since $\what\Lambda$ is Hermitian,
we let $\what\Lambda=Q\wtd\Lambda Q^{\HH}$ where $Q$ is an orthogonal matrix and $\wtd\Lambda$ is diagonal, and we get
\[
	\trace(\what \Lambda  X_{\infty}^{\HH}\Lambda_{\infty} X_{\infty})
	=\trace(Q\wtd\Lambda Q^{\HH} X_{\infty}^{\HH}\Lambda_{\infty} X_{\infty})
	=\trace(\wtd\Lambda (X_{\infty}Q)^{\HH}\Lambda_{\infty} (X_{\infty}Q)).
\]
Let $\what\Lambda=\diag(\what\lambda_1,\ldots,\what\lambda_{\what n})$ and
$\Lambda_{\infty}=\diag(\lambda_{\infty,1},\ldots,\lambda_{\infty,t})$ where $t\ge 1$. We have
\[
	\inf_{X_{\infty}}\trace(\what \Lambda  X_{\infty}^{\HH}\Lambda_{\infty} X_{\infty})
	=\inf_{X_{\infty}} \sum_{i,j} \what \lambda_{j} \lambda_{\infty,i}\abs{x_{\infty;ij}}^2,
\]
where we have written $X_{\infty}=[x_{\infty;ij}]$. Since $X_{\infty}$ is arbitrary,  each $\abs{x_{\infty;ij}}^2\ge 0$
can be made $0$ or arbitrarily large. Hence
\[
	\inf_{X_{\infty}}\trace(\what \Lambda  X_{\infty}^{\HH}\Lambda_{\infty} X_{\infty})>-\infty
	\quad\text{if and only if all $\what \lambda_{j} \lambda_{\infty,i}\ge 0$},
\]
in which case, the infimum is $0$. Notice that $\lambda_{\infty,i}=\pm 1$. There are three possible situations
for all $\what \lambda_{j} \lambda_{\infty,i}\ge 0$:
\begin{enumerate}
	\item all $\what \lambda_{j}=0$ if both $\pm 1$ appear among all $\lambda_{\infty,i}$;
	\item all $\what \lambda_{j}\ge 0$ if all $\lambda_{\infty,i}=1$;
	\item all $\what \lambda_{j}\le 0$ if all $\lambda_{\infty,i}=-1$.
\end{enumerate}
The first situation is not allowed because it implies $\what\Lambda=0$ and hence $\what A=0$ which is excluded to begin with.
Therefore, we conclude
\begin{equation}\label{eq:BP:case2:pf-2}
	\inf_{X_{\infty}}\trace(\what \Lambda  X_{\infty}^{\HH}\Lambda_{\infty} X_{\infty})>-\infty
	\,\,\text{if and only if either $\what\Lambda\succeq 0$, $\Lambda_{\infty}=I$
	or $\what\Lambda\preceq 0$, $\Lambda_{\infty}=-I$}.
\end{equation}

Consider now the first term in \cref{eq:BP:case2:pf-1}, which falls into the case in \cref{ssec:BP:case1}.
In light of \cref{eq:BP:case2:pf-2}, to see when
\begin{equation}\label{eq:BP:case2:pf-3}
	\inf_{X_r^{\HH}J_rX_r=\what J}\trace(\what \Lambda  X_r^{\HH}\Lambda_r X_r)>-\infty
\end{equation}
and what the infimum is,
it suffices to investigate what will happen when either
$\what\Lambda\succeq 0$, $\Lambda_{\infty}=I$
or $\what\Lambda\preceq 0$, $\Lambda_{\infty}=-I$. We have the following:
\begin{enumerate}
	\item Suppose $\what\Lambda\succeq 0$, $\Lambda_{\infty}=I$. Then $(\what\Lambda,\what J)\succeq 0$ and, by the result of \cref{ssec:BP:case1},
		\cref{eq:BP:case2:pf-3} holds if and only if
		$(\Lambda_r,J_r)\succeq 0$ and $(J_r,\what\Lambda,\what J)$ is proper, which is the same as
		that $(\Lambda,J)\succeq 0$ and $(J,\what\Lambda,\what J)$ is proper because of
		\cref{eq:BP:case2:pf-0} and $\Lambda_{\infty}=I$.
	\item Suppose $\what\Lambda\preceq 0$, $\Lambda_{\infty}=-I$. Then $(\what\Lambda,\what J)\preceq 0$ and, by the result of \cref{ssec:BP:case1},
		\cref{eq:BP:case2:pf-3} holds if and only if
		$(\Lambda_r,J_r)\preceq 0$ and $(-J_r,-\what\Lambda,-\what J)$ is proper, which is the same as
		that $(\Lambda,J)\preceq 0$ and $(-J,-\what\Lambda,-\what J)$ is proper because of
		\cref{eq:BP:case2:pf-0} and $\Lambda_{\infty}=-I$.
\end{enumerate}

\subsection{Involving block pairs of types \block{r}{$p$} with $p\le 2$, \block{c}{$1$}, and \block{$\infty$}{$1$}}
\label{ssec:BP:case3}
It suffices to consider there are some block pairs of type \block{r}{$2$} in the mix; otherwise the situation
has already been taken care of in \cref{ssec:BP:case2}. Let $\varepsilon>0$ be arbitrary tiny, and perturb
each block pair $(\eta K_2(\alpha), \eta F_2)$ of type \block{r}{$2$} according to
\begin{equation}\label{eq:BP:case3:pf-0}
	K_2(\alpha) \to K_2(\alpha)+\varepsilon \be_1  \be_1^{\T},
\end{equation}
which turns the block pair$(\eta K_2(\alpha), \eta F_2)$ to two block pairs \block{r}{$1$} with eigenvalues
$\alpha\pm\sqrt{\varepsilon}$, respectively, and both are continuous in  $\varepsilon$
and go to $\alpha$ as $\varepsilon\to 0^+$. As a result, both $\Lambda$ and $\what\Lambda$ are possibly perturbed to
$\Lambda_{\varepsilon}$ and $\what\Lambda_{\varepsilon}$, satisfying
\begin{equation*}
	\Lambda_{\varepsilon}
	\begin{cases}
		\equiv\Lambda, \quad&\text{if no block pair of type \block{r}{$2$} in $(\Lambda, J)$}, \\
		\to\Lambda, \quad&\text{as $\varepsilon\to 0^+$}.
	\end{cases}
\end{equation*}
The same holds true for $(\what \Lambda_{\varepsilon},\what J)$.
Consider now $(\Lambda_{\varepsilon},J)$ and $(\what\Lambda_{\varepsilon},\what J)$ in which
only block pairs of types \block{r}{$1$}, \block{c}{$1$}, and \block{$\infty$}{$1$}
are possibly involved. It is important to note that both $J$ and $\what J$ are not perturbed, leaving
$\inertia_{\pm}(J)$ and $\inertia_{\pm}(\what J)$ unaffected.
Note that, for any $\alpha\in\bbR$, $\big(K_2(\alpha), F_2\big)\succeq 0$ but $\big(K_2(\alpha), F_2\big)\not\preceq 0$.

\begin{lemma}\label{lm:case3-pf-1}
	Given $\varepsilon>0$,
	$\big(K_2(\alpha)+\varepsilon \be_1  \be_1^{\T}\big)-\lambda_0 F_2\succeq 0$ if and only if
	$\alpha-\sqrt{\varepsilon}\le\lambda_0\le\alpha+\sqrt{\varepsilon}$.
\end{lemma}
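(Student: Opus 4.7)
The plan is to prove this by direct computation, since the matrix in question is only $2\times 2$ and positive semidefiniteness of a $2\times 2$ Hermitian matrix reduces to nonnegativity of its diagonal entries together with nonnegativity of its determinant.

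First I would write out the matrix explicitly. Using the definitions
\[
	K_2(\alpha)=\begin{bmatrix} 0 & \alpha \\ \alpha & 1 \end{bmatrix}, \qquad
	F_2=\begin{bmatrix} 0 & 1 \\ 1 & 0 \end{bmatrix},
\]
a straightforward substitution gives
\[
	\big(K_2(\alpha)+\varepsilon\be_1\be_1^{\T}\big)-\lambda_0 F_2
	=\begin{bmatrix} \varepsilon & \alpha-\lambda_0 \\ \alpha-\lambda_0 & 1 \end{bmatrix}.
\]

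Next I would apply the standard criterion for a $2\times 2$ Hermitian matrix to be positive semidefinite, namely that both diagonal entries are nonnegative and the determinant is nonnegative. The diagonal entries $\varepsilon>0$ and $1>0$ are automatically positive, so the condition reduces to the determinant condition
\[
	\varepsilon\cdot 1-(\alpha-\lambda_0)^2\ge 0,
\]
which is equivalent to $(\alpha-\lambda_0)^2\le\varepsilon$, i.e., $|\alpha-\lambda_0|\le\sqrt{\varepsilon}$, i.e.,
\[
	\alpha-\sqrt{\varepsilon}\le\lambda_0\le\alpha+\sqrt{\varepsilon}.
\]

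There is no real obstacle here; the statement is essentially a sanity check that the perturbation \cref{eq:BP:case3:pf-0} turns the block pair $(K_2(\alpha),F_2)$ into a positive semidefinite pair whose eigenvalues are precisely $\alpha\pm\sqrt{\varepsilon}$, consistent with the splitting described just before the lemma. The only thing worth emphasizing is that because $\varepsilon>0$, the leading diagonal entry condition is strict and so no degenerate subcase needs to be treated separately.
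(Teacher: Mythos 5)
Your proof is correct and follows essentially the same route as the paper: both write out the matrix as $\begin{bsmallmatrix}\varepsilon & \alpha-\lambda_0\\ \alpha-\lambda_0 & 1\end{bsmallmatrix}$ and, using $\varepsilon>0$, reduce positive semidefiniteness to the determinant condition $\varepsilon-(\alpha-\lambda_0)^2\ge 0$.
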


\begin{proof}
	Notice that
	\[
		K_2(\alpha)+\varepsilon \be_1  \be_1^{\T}-\lambda_0 F_2
		=\begin{bmatrix}
			\varepsilon & \alpha-\lambda_0 \\
			\alpha-\lambda_0 & 1
		\end{bmatrix}.
	\]
	Since $\varepsilon>0$, the matrix is positive semidefinite if and only if its determinant $\varepsilon-(\alpha-\lambda_0)^2\ge 0$.
\end{proof}

The next lemma is stated in terms of $(\Lambda,J)$. It is clearly valid if $(\Lambda,J)$ is replaced with $(\what\Lambda,\what J)$.

\begin{lemma}\label{lm:case3-pf-2}
	Suppose that $(\Lambda,J)$ is a direct sum of block pairs of types
	\block{r}{$p$} with $p\le 2$, \block{c}{$1$}, and \block{$\infty$}{$1$} and that
	each block pair of type \block{r}{$2$} is perturbed according to \cref{eq:BP:case3:pf-0} where
	$\varepsilon>0$.
	\begin{enumerate}[{\rm (a)}]
		\item If there is a positive sequence $\set{\varepsilon_i}_{i=1}^{\infty}$ converging to $0$, i.e.,
			$0<\varepsilon_i\to 0$ as $i\to \infty$, such that
			$(\Lambda_{\varepsilon_i},J)\succeq 0$ for all $i$, then
			$(\Lambda,J)\succeq 0$, and $(\Lambda,J)$ can only contain block pairs $\big(\eta K_2(\alpha), \eta F_2\big)$ with $\eta=1$ and the same $\alpha$
			for all block pairs of type \block{r}{$2$}, in which case
			$\Lambda-\lambda_0 J\succeq 0$ with $\lambda_0=\alpha$ and only with $\lambda_0=\alpha$.

			Conversely, if $(\Lambda,J)\succeq 0$, then $(\Lambda_{\varepsilon},J)\succeq 0$.
		\item If there is a positive sequence $\set{\varepsilon_i}_{i=1}^{\infty}$ converging to $0$, i.e.,
			$0<\varepsilon_i\to 0$ as $i\to \infty$, such that
			$(\Lambda_{\varepsilon_i},J)\preceq 0$ for all $i$, then
			$(\Lambda,J)\preceq 0$, and $(\Lambda,J)$ can only contain block pairs $\big(\eta K_2(\alpha), \eta F_2\big)$ with $\eta=-1$ and the same $\alpha$
			for all block pairs of type \block{r}{$2$}, in which case
			$\Lambda-\lambda_0 J\preceq 0$ with $\lambda_0=\alpha$ and only with $\lambda_0=\alpha$.

			Conversely, if $(\Lambda,J)\preceq 0$, then $(\Lambda_{\varepsilon},J)\preceq 0$.
	\end{enumerate}
\end{lemma}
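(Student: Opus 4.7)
I will prove part (a) in detail; part (b) then follows by applying part (a) to $(-\Lambda,-J)$ and its perturbation, using $(C,D)\preceq 0\Leftrightarrow (-C,-D)\succeq 0$ together with the fact that the sign flip $\eta\mapsto-\eta$ in each \block{r}{2} block is already absorbed into the parameter $\eta\in\{\pm 1\}$ of the canonical form.

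For the converse direction of (a), I assume $(\Lambda,J)\succeq 0$, so $\Lambda-\lambda_0 J\succeq 0$ for some $\lambda_0\in\bbR$. For each \block{r}{2} block $(\eta K_2(\alpha),\eta F_2)$, the corresponding $2\times 2$ block of $\Lambda-\lambda_0 J$ equals $\eta\begin{bmatrix}0 & \alpha-\lambda_0\\ \alpha-\lambda_0 & 1\end{bmatrix}$, whose determinant is $-(\alpha-\lambda_0)^2\le 0$; positive semidefiniteness forces $\lambda_0=\alpha$ and $\eta=1$. Hence every \block{r}{2} block has $\eta=1$ and shares the same $\alpha=\lambda_0$, which simultaneously yields the uniqueness statement about $\lambda_0$. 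After the perturbation \cref{eq:BP:case3:pf-0}, each such block contributes the PSD matrix $\diag(\varepsilon,1)$ to $\Lambda_\varepsilon-\alpha J$, while every block of other type is unaffected and remains PSD; therefore $(\Lambda_\varepsilon,J)\succeq 0$.

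For the forward direction, suppose $(\Lambda_{\varepsilon_i},J)\succeq 0$, so there exists $\lambda_i\in\bbR$ with $\Lambda_{\varepsilon_i}-\lambda_i J\succeq 0$. If $(\Lambda,J)$ contains no \block{r}{2} block, then $\Lambda_{\varepsilon_i}\equiv\Lambda$ and the claim is immediate. Otherwise, for each \block{r}{2} block with parameters $(\eta_k,\alpha_k)$, \Cref{lm:case3-pf-1} forces $\eta_k=1$ and $|\alpha_k-\lambda_i|\le\sqrt{\varepsilon_i}$ for every $i$. The triangle inequality yields $|\alpha_k-\alpha_{k'}|\le 2\sqrt{\varepsilon_i}\to 0$, so all $\alpha_k$ coincide with a single value $\alpha$, and $\lambda_i\to\alpha$. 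Passing to the limit $i\to\infty$ in $\Lambda_{\varepsilon_i}-\lambda_i J\succeq 0$, using closedness of the PSD cone together with $\Lambda_{\varepsilon_i}\to\Lambda$, gives $\Lambda-\alpha J\succeq 0$, i.e., $(\Lambda,J)\succeq 0$.

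The main obstacle is controlling the shift $\lambda_i$ as $\varepsilon_i\to 0$: a priori these could diverge or oscillate, and without some anchoring one cannot pass to the limit inside the PSD cone. The key observation resolving this is that the presence of any single \block{r}{2} block, via \Cref{lm:case3-pf-1}, traps $\lambda_i$ in a $\sqrt{\varepsilon_i}$-neighborhood of the block's parameter $\alpha$; this simultaneously produces the common-$\alpha$ structure across all \block{r}{2} blocks, rules out $\eta_k=-1$, and supplies the convergence $\lambda_i\to\alpha$ that drives the PSD limit. The case with no \block{r}{2} block is trivial because the perturbation then acts as the identity.
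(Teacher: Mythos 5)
Your proof is correct and follows the same overall structure as the paper's: the converse direction via the determinant of the $2\times 2$ block (forcing $\eta=1$ and $\lambda_0=\alpha$), the forward direction via \Cref{lm:case3-pf-1} applied to each \block{r}{2} block, and the reduction of (b) to (a) by negation. The one genuine difference is how you control the shifts $\lambda_i$ before passing to the limit: the paper first proves $\{\mu_i\}$ is bounded by invoking the eigenvalue bound of \cite[Lemma 3.8]{lilb:2013} together with the uniform boundedness of the finite eigenvalues of $(\Lambda_{\varepsilon_i},J)$, then extracts a convergent subsequence; you instead split into cases and, when a \block{r}{2} block is present, use \Cref{lm:case3-pf-1} to trap $\lambda_i$ in $[\alpha-\sqrt{\varepsilon_i},\alpha+\sqrt{\varepsilon_i}]$, which gives $\lambda_i\to\alpha$ directly (the case with no \block{r}{2} block being trivial since then $\Lambda_{\varepsilon_i}\equiv\Lambda$). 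Your variant is slightly more self-contained, avoiding the external citation and the subsequence extraction, at the modest cost of a case split; both arguments are sound.
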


\begin{proof}
	We will only prove item (a). The same argument with minor modifications can be used to prove item (b).

	Suppose that $(\Lambda_{\varepsilon_i},J)\succeq 0$ for all $i$, which means that for each $i$ there exists $\mu_i$
	such that $\Lambda_{\varepsilon_i}-\mu_i J\succeq 0$. By \cite[Lemma 3.8]{lilb:2013}, $\abs{\mu_i}$ can be taken no bigger than
	the absolute values of the finite eigenvalues of $(\Lambda_{\varepsilon_i},J)$.
	Under the perturbation, the finite eigenvalues of matrix pairs $(\Lambda_{\varepsilon_i},J)$ are uniformly bounded because
	they converges to the finite eigenvalues of $(\Lambda,J)$. Hence $\set{\mu_i}_{i=1}^{\infty}$ is bounded and thus has a
	convergent subsequence $\set{\mu_i}_{i\in\bbI}$, say converging to $\lambda_0$, where $\bbI$ is an infinite subset of $\set{1,2,\ldots,}$.
	Letting $\bbI\ni i\to\infty$ in $\Lambda_{\varepsilon_i}-\mu_i J\succeq 0$ yields
	$\Lambda-\lambda_0 J\succeq 0$.

	If $(\Lambda,J)$ ever contains a block pair $\big(\eta K_2(\alpha), \eta F_2\big)$, then we will have
	\[
		\eta(K_2(\alpha)+\varepsilon_i \be_1  \be_1^{\T}-\mu_i F_2)
		=\eta\begin{bmatrix}
			\varepsilon_i & \alpha-\mu_i \\
			\alpha-\mu_i & 1
		\end{bmatrix}\succeq 0
	\]
	for all $i$, which implies $\eta=1$ and $\alpha-\sqrt{\varepsilon_i}\le\mu_i\le\alpha+\sqrt{\varepsilon_i}$.
	Letting $i\to\infty$ yields $\mu_i\to\alpha$. If $(\Lambda,J)$ also contains another
	block pair $\big(\wtd \eta K_2(\wtd \alpha), \wtd \eta F_2\big)$ of the same type. Using the same argument as we just did,
	we find $\wtd\eta=1$ and also $\mu_i\to\wtd\alpha$ yielding $\wtd\alpha=\alpha$.

	Conversely, suppose that $(\Lambda,J)\succeq 0$. If no block pair of type \block{r}{$2$} is involved in $(\Lambda,J)$, then
	$\Lambda_{\varepsilon}\equiv \Lambda$ and hence no proof is necessary. If $(\Lambda,J)$
	does contain block pairs of \block{r}{$2$}, then these block pairs must be
	$\big( K_2(\alpha),  F_2\big)$ with the same $\alpha$. Therefore the only $\lambda_0$ that makes $\Lambda-\lambda_0 J\succeq 0$
	is $\lambda_0=\alpha$ which also makes $\big(K_2(\alpha)+\varepsilon \be_1  \be_1^{\T}\big)-\lambda_0 F_2\succeq 0$ for
	any $\varepsilon>0$. By the way how $\Lambda$ is perturbed to  $\Lambda_{\varepsilon}$, we find
	$\Lambda_{\varepsilon}-\lambda_0 J\succeq 0$.
\end{proof}

By the results of \cref{ssec:BP:case1,ssec:BP:case2}, we conclude that
\begin{equation}\label{eq:BP:case3:pf-2}
	\inf_{ X^{\HH}J X=\what J}\trace(\what \Lambda_{\varepsilon} X^{\HH}\Lambda_{\varepsilon} X)>-\infty
\end{equation}
if and only if
one of the following two scenarios occurs:
\begin{enumerate}[(1)]
	\item both $(\Lambda_{\varepsilon},J)$ and $(\what\Lambda_{\varepsilon},\what J)$ are positive  semidefinite pairs and
		$(J,\what\Lambda_{\varepsilon},\what J)$ is proper;
	\item both $(\Lambda_{\varepsilon},J)$ and $(\what\Lambda_{\varepsilon},\what J)$ are negative semidefinite pairs and
		$(-J,-\what\Lambda_{\varepsilon},-\what J)$ is proper.
\end{enumerate}
Let $\set{\varepsilon_i}_{i=1}^{\infty}$ be a positive sequence that converges to $0$, i.e.,
$0<\varepsilon_i\to 0$ as $i\to \infty$. Since there are only two scenarios here, there is a subsequence
$\set{\varepsilon_i}_{i\in\bbI}$ such that one of the two scenarios holds true for all $i\in\bbI$. In the case when
\[
	\text{
		for all $i\in\bbI$, both $(\Lambda_{\varepsilon_i},J)\succeq 0$, $(\what\Lambda_{\varepsilon_i},\what J)\succeq 0$, and
		$(J,\what\Lambda_{\varepsilon_i},\what J)$ is proper,
	}
\]
we have
both $(\Lambda,J)\succeq 0$, $(\what\Lambda,\what J)\succeq 0$, and
$(J,\what\Lambda,\what J)$ is proper, as a consequence of  \Cref{lm:case3-pf-2}. Similarly, we can conclude that if
\[
	\text{
		for all $i\in\bbI$, both $(\Lambda_{\varepsilon_i},J)\preceq 0$, $(\what\Lambda_{\varepsilon_i},\what J)\preceq 0$, and
		$(-J,-\what\Lambda_{\varepsilon_i},-\what J)$ is proper,
	}
\]
then
both $(\Lambda,J)\preceq 0$, $(\what\Lambda,\what J)\preceq 0$, and
$(-J,-\what\Lambda,-\what J)$ is proper.

With either scenario, the infimum in \cref{eq:BP:case3:pf-2} has a close formula as in \cref{eq:main-inf}, or it applied to
$(-\Lambda_{\varepsilon},-J)$ and $(-\what\Lambda_{\varepsilon},-\what J)$. Because of the continuity of
these eigenvalues with respect to $\varepsilon$, the limit of the infimum exists as $\varepsilon\to 0^+$.
Since the perturbation does not affect $\inertia_{\pm}(J)$ and $\inertia_{\pm}(\what J)$ at all, the limit
takes the same form as \cref{eq:main-inf}, or it applied to
$(-\Lambda,-J)$ and $(-\what\Lambda,-\what J)$.

\subsection{Involving block pairs of all possible types in \cref{eq:BP-possible}}
\label{ssec:BP:case4}
In this subsection, we will allow  all block pairs of types in \cref{eq:BP-possible}
to possibly appear in $(\Lambda,J)$ and $(\what\Lambda,\what J)$.
Block pairs of types  in 
\begin{equation}
	\label{tbl:last-ones}
	\text{
		\begin{tabular}{|c|c|c|c|c|}
			\hline
			type & \block{s}{$2 p+1$} & \block{c}{$p$} & \block{r}{$p$} & \block{$\infty$}{$p$} \\ \hline
			$p$ & $p\ge 1$ & $p\ge 2$ & $p\ge 3$ & $p\ge 2$ \\
			\hline
		\end{tabular}
	}
\end{equation}
remain to be included for considerations, as we have already considered
\block{r}{$p$} with $p\le 2$, \block{c}{$1$}, and \block{$\infty$}{$1$},

Notice that a positive/negative semidefinite matrix pair does not contain
any block pair of these types in \Cref{tbl:last-ones} in its canonical form (see \Cref{rk:LambdaJ-PD}).
In what follows, we will show that
\begin{equation}\label{eq:BP:case4:pf-2}
	\inf_{X^{\HH}JX=\what J}\trace(\what \Lambda  X^{\HH}\Lambda X)=-\infty
\end{equation}
if any block pair of these types in \Cref{tbl:last-ones} is contained in either $(\Lambda,J)$ or
$(\what\Lambda,\what J)$ or both, besides \block{r}{$p$} with $p\le 2$, \block{c}{$1$}, and \block{$\infty$}{$1$}.
The idea is to perturb $(\Lambda,J)$ and/or $(\what\Lambda,\what J)$ to
$(\Lambda_{\varepsilon},J)$ and/or $(\what\Lambda_{\varepsilon},\what J)$
such that
\begin{enumerate}
	\item $\Lambda_{\varepsilon}\to \Lambda$ and $\what\Lambda_{\varepsilon}\to\what\Lambda$ as $\varepsilon\to 0$,
	\item for sufficiently tiny $\varepsilon>0$, the canonical forms of $(\Lambda_{\varepsilon},J)$ and $(\what\Lambda_{\varepsilon},\what J)$ contain block pairs of types
		\block{r}{$1$},\block{c}{$1$}, and \block{$\infty$}{$1$} only, and that has been investigated in \cref{ssec:BP:case2}, and
		either
		\begin{equation}
			\inf_{X^{\HH}JX=\what J}\trace(\what \Lambda_{\varepsilon}  X^{\HH}\Lambda_{\varepsilon} X)
			=-\infty, \label{eq:BP:case4:pf-3-1}
		\end{equation}
		or
		\[
			\lim_{\varepsilon\to 0}
			\inf_{X^{\HH}JX=\what J}\trace(\what \Lambda_{\varepsilon}  X^{\HH}\Lambda_{\varepsilon} X)
			=-\infty. 
		\]
\end{enumerate}
Hence, we justify our claim \cref{eq:BP:case4:pf-2} for the case of interest.

Specifically, we perturb the {\em first\/} block elements in block pairs of the types in \Cref{tbl:last-ones}
as follows:
\begin{align*}
	K_{2p+1}(0)=\begin{bmatrix}
		&&K_p(0)\\ &0&\be_1^{\T}\\ K_p(0)&\be_1&\\
	\end{bmatrix}
	&\rightarrow
	\begin{bmatrix}
		&&K_p(\ii\varepsilon)\\&\varepsilon&\be_1^{\T}\\ K_p(-\ii\varepsilon)&\be_1&\\
	\end{bmatrix},
	\\
	\begin{bmatrix}
		0 & K_p (\alpha+\ii  \beta) \\
		K_p (\alpha+\ii  \beta) &
	\end{bmatrix}
	&\rightarrow\begin{bmatrix}
		0 & K_p (\alpha+\ii  \beta)+\varepsilon \be_1  \be_1^{\T}  \\
		K_p (\alpha+\ii  \beta)+\varepsilon \be_1  \be_1^{\T}
	\end{bmatrix},
	\\
	F_p  & \rightarrow F_p +\varepsilon \be_1  \be_1^{\T}, 
	\\
	K_p (\alpha)  & \rightarrow  K_p (\alpha)+\varepsilon \be_1  \be_1^{\T}.  
\end{align*}
We restrict $\varepsilon>0$, except for \block{$\infty$}{$2$}, for which $\varepsilon<0$ is also allowed.
\begin{enumerate}[(i)]
	\item A block pair of type \block{c}{$p$} with $p\ge 2$ and eigenvalues $\alpha \pm \ii  \beta$
		generates $p$ block pairs of type \block{c}{$1$} with eigenvalues
		\[
			\alpha \pm \ii  \beta \pm \varepsilon^{1 / p} \exp (\ii \frac{2 \pi   j}{p}), j=0, \ldots, p-1.
		\]
		Among them there are genuine conjugate complex eigenvalues.
	\item A block pair of type \block{s}{$2 p+1$} with $p\ge 1$ generates a block pair of type \block{$\infty$}{$1$}
		and a block pair of type \block{c}{$p$} with a pair of genuine conjugate complex eigenvalues,
		and eventually generates a block pair of type \block{$\infty$}{$1$} and $2p$ block pairs of type \block{c}{$1$} with eigenvalues some of which are genuine conjugate complex eigenvalues.
	\item A block pair of type \block{r}{$p$} with $p\ge 3$ and eigenvalues $\alpha$ generates $p$ block pairs of type \block{c}{$1$}
		or \block{r}{$1$} with eigenvalues
		\[
			\alpha+\varepsilon^{1 / p} \exp (\ii  \frac{2 \pi   j}{p}), j=0, \ldots, p-1.
		\]
		Among them there are genuine conjugate complex eigenvalues.
	\item A block pair of type \block{$\infty$}{$p$} with $p\ge 2$ generates a block pair of type \block{$\infty$}{$1$} and $p-1$
		block pairs of type \block{c}{$1$} or \block{r}{$1$} with eigenvalues
		\[
			\varepsilon^{-1/(p-1)}\exp(\ii\frac{2\pi j}{p-1}),j=0, \dots, p-2.
		\]
		Among them there are genuine conjugate complex eigenvalues if $p\ge 3$.
\end{enumerate}
After perturbations, $(\Lambda_{\varepsilon},J)$ and $(\what\Lambda_{\varepsilon},\what J)$ themselves are no longer
in their canonical forms as the ones specified in \Cref{lm:HCF}. But they can be turned into
their canonical forms, in which only possible block pairs of types \block{c}{$1$}, \block{r}{$1$}, and \block{$\infty$}{$1$} show up.
When any one of (i), (ii), (iii), and (iv) with $p>2$ occurs, we will have at least one block pair of
type \block{c}{$1$} in the canonical form, and hence \cref{eq:BP:case4:pf-3-1} holds
by the results in \cref{ssec:BP:case2}, which implies \cref{eq:BP:case4:pf-2}.
%
%

It remains to consider (iv) with $p=2$ and only block pairs of type \block{$\infty$}{$2$},
besides \block{r}{$p$} with $p\le 2$ and \block{$\infty$}{$1$}, can show up. We exclude any
block pair of type \block{c}{$1$} because if such a block pair exists, we will have, after perturbations,
\cref{eq:BP:case4:pf-3-1} and hence \cref{eq:BP:case4:pf-2}.
Note that
block pair of type \block{$\infty$}{$2$} can only be contained
in $(\Lambda,J)$ according to \Cref{eq:BP-possible}, while $(\what\Lambda,\what J)$ contains
possibly block pairs of type \block{r}{$p$} with $p\le 2$.
Without needing to perturb any block pair of type \block{r}{$2$} in $(\what\Lambda,\what J)$, if any,  \Cref{lm:Tinfty2} below shows that \Cref{eq:BP:case4:pf-2} holds.

\begin{lemma}\label{lm:Tinfty2}
	If $(\Lambda, J)$ contains a block pair of type \block{$\infty$}{$2$}, then
	\Cref{eq:BP:case4:pf-2} holds.
\end{lemma}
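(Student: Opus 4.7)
The plan is to exploit the defectiveness of the infinite eigenvalue encoded in the \block{$\infty$}{$2$} block, which supplies a direction in $X$ along which the objective depends linearly but the constraint is trivial, forcing unboundedness below. After reordering, write
\[
\Lambda=\begin{bmatrix}\eta F_2 & \\ & \Lambda_r\end{bmatrix},\quad J=\begin{bmatrix}\eta K_2(0) & \\ & J_r\end{bmatrix},\quad \eta\in\{\pm 1\},
\]
and partition $X\in\bbC^{n\times\what n}$ accordingly as $X=[\bs{a}^{\HH};\,\bs{b}^{\HH};\,X_r]$ with $\bs{a},\bs{b}\in\bbC^{\what n}$ and $X_r\in\bbC^{(n-2)\times\what n}$. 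Since $K_2(0)=\be_2\be_2^{\T}$ and $F_2\be_1=\be_2$, a direct computation gives
\begin{align*}
X^{\HH}JX&=\eta\,\bs{b}\bs{b}^{\HH}+X_r^{\HH}J_rX_r,\\
\trace(\what\Lambda X^{\HH}\Lambda X)&=2\eta\,\mathrm{Re}(\bs{b}^{\HH}\what\Lambda\bs{a})+\trace(\what\Lambda X_r^{\HH}\Lambda_rX_r).
\end{align*}
The pivotal point is that $\bs{a}$ is entirely absent from the constraint $X^{\HH}JX=\what J$---geometrically, $\be_1\in\cN(J)$---while it enters the objective linearly via $\what\Lambda\bs{b}$.

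Consequently, once one has any feasible $(\bs{b}^{*},X_r^{*})$ with $\what\Lambda\bs{b}^{*}\ne 0$, the choice $\bs{a}(t)=-t\eta\,\what\Lambda\bs{b}^{*}$ produces a feasible family $X(t)$ along which
$\trace(\what\Lambda X(t)^{\HH}\Lambda X(t))=-2t\,\|\what\Lambda\bs{b}^{*}\|^{2}+(\mathrm{const})\to-\infty$
as $t\to+\infty$, which establishes \cref{eq:BP:case4:pf-2}. The lemma therefore reduces to exhibiting one feasible $\bs{b}^{*}$ outside $\cN(\what\Lambda)$.

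For this, take any feasible $X^{(0)}$ (whose existence is the standing feasibility hypothesis), with second-row vector $\bs{b}^{(0)}$. If $\what\Lambda\bs{b}^{(0)}\ne 0$ we are done; otherwise perturb $\bs{b}^{(0)}\to\bs{b}^{(0)}+s\bs{u}$ for small $s>0$ along some $\bs{u}\notin\cN(\what\Lambda)$. Feasibility of $\bs{b}$ is characterized by the inertia bound $\inertia_{\pm}(\what J-\eta\,\bs{b}\bs{b}^{\HH})\le\inertia_{\pm}(J_r)$. When $\what J-\eta\bs{b}^{(0)}\bs{b}^{(0)\HH}$ is nonsingular, continuity of inertia preserves the bound for every $\bs{u}$ and small $s$; when it possesses a null eigenvector $\bs{w}$, the nonsingularity of $\what J$ forces $\bs{b}^{(0)\HH}\bs{w}\ne 0$, and first-order perturbation theory shows the zero eigenvalue shifts by $-2s\eta\,\mathrm{Re}((\bs{u}^{\HH}\bs{w})(\bs{b}^{(0)\HH}\bs{w}))$, a nontrivial real-linear functional of $\bs{u}$. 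Hence the directions preserving the inertia bound form a real half-space of $\bbC^{\what n}$ which, being of positive measure, cannot lie inside the proper complex subspace $\cN(\what\Lambda)$; a suitable $\bs{u}$ therefore exists. The main obstacle is this perturbation argument in the singular case; everything else is direct bookkeeping with the block structure.
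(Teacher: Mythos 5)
Your reduction is correct and genuinely different from the paper's: instead of perturbing the \block{$\infty$}{$2$} block and passing to a limit, you exploit directly that $\be_1\in\cN(\eta K_2(0))$ while $\eta F_2$ couples the two rows $\bs a^{\HH},\bs b^{\HH}$, so the objective is linear in the row $\bs a$ that the constraint never sees. The identities $X^{\HH}JX=\eta\bs b\bs b^{\HH}+X_r^{\HH}J_rX_r$ and $\trace(\what\Lambda X^{\HH}\Lambda X)=2\eta\,\mathrm{Re}(\bs b^{\HH}\what\Lambda\bs a)+\trace(\what\Lambda X_r^{\HH}\Lambda_rX_r)$ are right, and once a feasible point with $\what\Lambda\bs b\ne 0$ is found, $\bs a=-t\eta\what\Lambda\bs b$ does drive the objective to $-\infty$. (The paper instead perturbs $F_2\to F_2+\varepsilon\be_1\be_1^{\T}$, splits the block into a \block{$\infty$}{$1$} pair of sign $\sign(\eta\varepsilon)$ plus a finite eigenvalue $-1/\varepsilon$, and invokes the unconstrained-$X_{\infty}$ analysis of \cref{ssec:BP:case2} together with the freedom to choose the sign of $\varepsilon$.)

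The gap is in the existence argument for a feasible $\bs b\notin\cN(\what\Lambda)$. When $\what J-\eta\bs b^{(0)}\bs b^{(0)\HH}$ is singular, the feasibility-preserving directions need not form a half-space: if both inertia bounds are tight, i.e.\ $\inertia_{\pm}(\what J-\eta\bs b^{(0)}\bs b^{(0)\HH})=\inertia_{\pm}(J_r)$, then moving the zero eigenvalue either way destroys feasibility, and for $\bs u$ in the kernel of your linear functional the eigenvalue moves only at second order, which your first-order analysis does not control. This case really occurs: take $\what J=I_2$, $\eta=1$, and $J_r$ with inertia $(1,\ast,0)$ (one \block{r}{1} block with $\eta=1$ plus blocks with singular $J$-part). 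Then $I_2-\bs b\bs b^{\HH}$ has eigenvalues $1$ and $1-\|\bs b\|^2$, the bounds force $1-\|\bs b\|^2=0$, and the feasible set of $\bs b$ is exactly the unit sphere. No straight-line perturbation $\bs b^{(0)}+s\bs u$ with small $s\ne 0$ stays feasible --- radial components fail at first order, tangential ones at second order since $\|\bs b^{(0)}+s\bs u\|^2=1+s^2\|\bs u\|^2$ when $\mathrm{Re}(\bs u^{\HH}\bs b^{(0)})=0$ --- so your construction produces nothing. The conclusion you need is nevertheless true: in the singular-and-tight case the feasible set is locally the smooth real quadric $\{\bs b:\eta\,\bs b^{\HH}\what J^{-1}\bs b=1\}$ of real dimension $2\what n-1$, which cannot lie inside $\cN(\what\Lambda)$ because a proper complex subspace has real codimension at least $2$. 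But establishing this requires moving along the quadric (equivalently, along the orbit of $\bs b^{(0)}$ under the $\what J$-unitary group, via a Witt-type extension), not along straight lines; as written, the existence step does not go through.
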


\begin{proof}
	We perturb any block pair of type \block{$\infty$}{$2$} in $(\Lambda, J)$ as
	\[
		\eta ( F_2,  K_2(0))
		\to\eta \left(\begin{bmatrix}
				\varepsilon & 1 \\
				1 & 0
				\end{bmatrix},\begin{bmatrix}
				0 & 0 \\
				0 & 1
			\end{bmatrix}
		\right)
		\sim\eta \left(\begin{bmatrix}
				\varepsilon & 0 \\
				0 & -\frac 1{\varepsilon}
				\end{bmatrix},\begin{bmatrix}
				0 & 0 \\
				0 & 1
			\end{bmatrix}
		\right)
		\sim \left(\begin{bmatrix}
				\sign(\eta\varepsilon) & 0 \\
				0 & -\frac 1{\varepsilon}
				\end{bmatrix},\begin{bmatrix}
				0 & 0 \\
				0 & \eta
			\end{bmatrix}
		\right),
	\]
	where ``$\sim$'' stands for ``is congruent to''. Without loss of generality, we may assume
	that $\eta ( F_2,  K_2(0))$ is the last block pair in $(\Lambda, J)$.
	As a result,
	\begin{align*}
		(\Lambda, J)&=\left(\begin{bmatrix}
				\Lambda_r & 0 \\
				0 & \eta  F_2
				\end{bmatrix},\begin{bmatrix}
				J_r & 0 \\
				0 & \eta K_2(0)
			\end{bmatrix}
		\right) \\
		\to \,(\Lambda_{\varepsilon}, J)&=\left(\begin{bmatrix}
				\Lambda_r & 0 \\
				0 & \eta ( F_2+\varepsilon \be_1\be_1^{\T})
				\end{bmatrix},\begin{bmatrix}
				J_r & 0 \\
				0 & \eta K_2(0)
			\end{bmatrix}
		\right) \\
		\sim\,(\underline{\Lambda}_{\varepsilon},\underline{J})&=\left(\begin{bmatrix}
				\Lambda_r & 0 & 0 \\
				0 & -\frac 1{\varepsilon} & 0 \\
				0 & 0 & \sign(\eta\varepsilon)
				\end{bmatrix},\begin{bmatrix}
				J_r & 0 & 0 \\
				0 & \eta & 0 \\
				0 & 0 & 0
		\end{bmatrix}\right) \\
		&=:\left(\begin{bmatrix}
				\underline{\Lambda}_{\varepsilon;r} & 0  \\
				0  & \sign(\eta\varepsilon)
				\end{bmatrix},\begin{bmatrix}
				\underline{J}_r & 0 \\
				0 & 0
		\end{bmatrix}\right),
	\end{align*}
	the canonical form of $(\Lambda_{\varepsilon}, J)$.
	Similarly to \cref{eq:BP:case2:pf-1}, we have
	\begin{align}
		\inf_{X^{\HH}JX=\what J}\trace(\what \Lambda  X^{\HH}\Lambda_{\varepsilon} X)
		&=\inf_{X^{\HH}\underline{J}X=\what J}
		\trace(\what\Lambda  X^{\HH}\underline{\what\Lambda}_{\varepsilon} X) \notag \\
		&=\inf_{X_r^{\HH}\underline{J}_rX_r=\what J}
		\trace(\what\Lambda  X_r^{\HH}\underline{\Lambda}_{\varepsilon;r} X_r)
		+\inf_{X_{\infty}}
		\trace(\what\Lambda  X_{\infty}^{\HH}\sign(\eta\varepsilon) X_{\infty})	.
		\label{eq:BP:case4:pf-5}
	\end{align}
	As in our argument after \cref{eq:BP:case2:pf-1},  we will consider the last infimum in \Cref{eq:BP:case4:pf-5}.
	For that purpose we may assume $\what\Lambda$ is diagonal. Because the freedom in making
	either $\varepsilon>0$ or $\varepsilon<0$, we can show the infimum over $X_{\infty}$
	is $-\infty$, unless $\what\Lambda=0$ which is excluded in \Cref{thm:J-unitary:total}.
	Hence we have \Cref{eq:BP:case4:pf-3-1} and hence \Cref{eq:BP:case4:pf-2}.
\end{proof}

Summarizing what we have done so far leads to the main result in \Cref{thm:J-unitary:total}.

\begin{remark}\label{rk:definite}
	So far, we have been assumed that $B$ is genuinely indefinite. We now comment on the proof for the case when $B$ is positive or negative semidefinite. It suffices to consider the case $B\succeq 0$, because when $B\preceq 0$, we can consider the infimum of interest for $(-A,-B)$
	and $(-\what A,-\what B)$, instead. Suppose that $B\succeq 0$. Then
	$\what B\succ 0$ because $\what B$ is
	always nonsingular and $\inertia_+(\what B)\le \inertia_+(B)$ and $\inertia_-(\what B)\le \inertia_-(B)$ by
	\cref{eq:inertia(BtB)}. We again transform matrix pairs $(A,B)$ and $(\what A,\what B)$ to their canonical forms
	as in \Cref{lm:HCF}. We will still have \cref{eq:simplified'} but with fewer possible types of  block pairs
	to consider in $(\Lambda, J)$ and $(\what \Lambda,\what J)$ than those in \cref{eq:BP-possible}. Specifically,
	\begin{equation*}
		(\Lambda, J):\,\text{\block{$\infty$}{$p$} with $p\le 2$, \block{r}{$1$}}; \quad
		(\what \Lambda,\what J): \,\text{\block{r}{$1$}}.
	\end{equation*}
	Also $\what J=I_{\what n}$ always. If no block pair of type \block{$\infty$}{$2$} shows up in $(\Lambda, J)$, then
	it falls into a special situation of \cref{ssec:BP:case2} where, though under the scope of $B$
	being genuinely indefinite, no argument there relies on that. If, however,
	\block{$\infty$}{$2$} is involved, we can use
	\Cref{lm:Tinfty2}.
\end{remark}

\section{Concluding remarks}\label{sec:conclusion}
We have established a trace minimization principle for two Hermitian matrix pairs $(A,B)$ and $(\what A,\what B)$:
\begin{equation}\label{eq:problem'}
	\inf_{\what BX^{\HH}BX=I_{\what n}}\trace(\what A X^{\HH}AX),
\end{equation}
where $A,\,B\in\bbC^{n\times n}$ and $\what A,\,\what B\in\bbC^{\what n\times\what n}$ are all Hermitian, and $\what n\le n$.
It is the most general one up to date, encompassing Fan's trace minimization principle \cite{fan:1949}
(for $\what A=\what B=I_{\what n}$ and $B=I_n$) and its straightforward extension (for $\what A=\what B=I_{\what n}$ and
positive definite $B$),
%
and most recent ones \cite{kove:1995,lilb:2013,liwz:2023} reviewed at \cref{sec:introduction}.
In those recent investigations,  the notion of positive semidefinite matrix pair was introduced:
a Hermitian matrix pair $(A,B)$ is positive (negative) semidefinite if there exists $\lambda_0\in\bbR$ such that
$A-\lambda_0B$ is positive (negative) semidefinite.

For investigating \cref{eq:problem'}, we introduced yet another notion for a Hermitian matrix triplet
$(B,\what A,\what B)$ being \emph{proper} in \Cref{dfn:proper}. We showed that
the infimum in \cref{eq:problem'} is finite is and only if either both $(A, B)$ and $(\what A, \what B)$ are positive  semidefinite pairs and
$(B,\what A,\what B)$ is proper, or
both  are negative semidefinite pairs and
$(-B,-\what A,-\what B)$ is proper, assuming $\what A\ne 0$, $A\ne \mu B$ for any $\mu\in \bbR$,
and $\what A\ne\what \mu\what B$ for any $\what\mu\in \bbR$ when $n=\what n$. A close formula for the infimum is given in terms of the finite eigenvalues of
the two semidefinite matrix pairs.
%
%

In \cite[Example 3.1]{liwz:2023}, the following example (in the notation here):
\begin{gather*}
	\mu=2,\,\,A=\begin{bmatrix}
		1 & \\ & \mu\\
	\end{bmatrix},\,\,
	B=\begin{bmatrix}
		1 & \\ & -1\\
	\end{bmatrix},\,\, \what B=B,\\
	\sigma=\frac {\sqrt{18-6\sqrt 2}}6,\,\,
	\Omega=\begin{bmatrix}
		1 & \\ & 1/4
	\end{bmatrix}, \,\,
	Q=\begin{bmatrix}
		\sqrt{1-\sigma^2} & -\sigma\\
		\sigma & \sqrt{1-\sigma^2}
	\end{bmatrix},\,\,
	\what A=Q^{\HH}\Omega Q,
\end{gather*}
was given to demonstrate that
the infimum in \cref{eq:problem'} may not be any sum of the products between the eigenvalues of $\what A$ and some of
the ones of $(A,B)$, as a justification for an assumption of \cite[Theorem~3.2]{liwz:2023}.
This now can be well explained by our \Cref{thm:J-unitary:total} in this paper, i.e.,
it is the eigenvalues of $(\what A,\what B)$, not $\what A$ alone, that should appear in
the infimum.
For the example, the eigenvalues of $(\what A,\what B)$ and of $(A,B)$ are
\[
	\what\lambda_1^+=\frac 12\sqrt 2,\,\,\what\lambda_1^-=-\frac 14\sqrt 2,
	\quad\text{and}\quad
	\lambda_1^+=1,\,\,\lambda_1^-=-2,
\]
respectively. Hence the infimum, by \Cref{thm:J-unitary:total}, is
$
\what\lambda_1^+\lambda_1^++\what\lambda_1^-\lambda_1^-=\sqrt 2.
$

{\small
	\bibliographystyle{plain}
	\bibliography{tracemin3-min}
}

\end{document}